\theoremstyle{definition}
\newtheorem{theorem}{Theorem}[section]
\newtheorem{corollary}[theorem]{Corollary}
\newtheorem{lemma}[theorem]{Lemma}
\newtheorem{proposition}[theorem]{Proposition}
\theoremstyle{definition}
\newtheorem{definition}[theorem]{Definition}
\newtheorem{setting}[theorem]{Setting}
\newtheorem{example}[theorem]{Example}
\newtheorem{remark}[theorem]{Remark}
\numberwithin{equation}{subsection}
\newtheorem{theoremx}{Theorem}
\newcommand{\m}{\mathfrak{m}}
\newcommand{\NN}{\mathbb{N}}
\newcommand{\ZZ}{\mathbb{Z}}
\newcommand{\QQ}{\mathbb{Q}}
\newcommand{\FF}{\mathbb{F}}
\newcommand{\Spec}{\operatorname{Spec}}
\newcommand{\Hom}{\operatorname{Hom}}
\newcommand{\Max}{\operatorname{Max}}
\renewcommand{\a}{\mathfrak{a}}
\newcommand{\ds}{\displaystyle}
\newcommand{\p}{\mathfrak{p}}
\newcommand{\q}{\mathfrak{q}}
\newcommand{\n}{\mathfrak{n}}
\newcommand{\lr}[1]{{\langle {#1} \rangle}}
\newcommand{\dif}[2]{^{\lr{#1}_{#2}}}
\newcommand{\derp}[1]{^{\lr{#1}_{p}}}
\newcommand{\difM}[1]{^{\lr{#1}_{\mathrm{mix}}}}
\newcommand{\V}{\mathcal{V}}
\newcommand{\Der}{\mathrm{Der}}
\renewcommand{\geq}{\geqslant}
\renewcommand{\leq}{\leqslant}
\begin{document}
\newcommand{\tens}{\otimes}
\newcommand{\hhtest}[1]{\tau ( #1 )}
\renewcommand{\hom}[3]{\operatorname{Hom}_{#1} ( #2, #3 )}
\renewcommand{\t}{\operatorname{type}}

\title{A Zariski-Nagata theorem for smooth $\ZZ$-algebras}

\author[De Stefani]{Alessandro De Stefani}
\address{Department of Mathematics, University of Nebraska, Lincoln, NE 68588-0130, USA}
\email{adestefani2@unl.edu}

\author[Grifo]{Elo\'isa Grifo}
\address{Department of Mathematics, University of Virginia, Charlottesville, VA 22904-4135, USA}
\email{eloisa.grifo@virginia.edu}
\thanks{}

\author[Jeffries]{Jack Jeffries}
\address{University of Michigan, Department of Mathematics, USA}
\email{jackjeff@umich.edu}
\thanks{The third author was partially supported by NSF Grant DMS~\#1606353.}

\maketitle
\begin{abstract}
In a polynomial ring over a perfect field, the symbolic powers of a prime ideal can be described via differential operators: a classical result by Zariski and Nagata says that the $n$-th symbolic power of a given prime ideal consists of the elements that vanish up to order $n$ on the corresponding variety. However, this description fails in mixed characteristic. In this paper, we use $p$-derivations, a notion due to Buium and Joyal, to define a new kind of differential powers in mixed characteristic, and prove that this new object does coincide with the symbolic powers of prime ideals. This seems to be the first application of $p$-derivations to commutative algebra.
\end{abstract}

\section{Introduction}

The subject of symbolic powers is both a classical commutative algebra topic and an active area of current research. While there are many open problems in the setting of algebras containing a field, even the results that are well-understood for algebras over fields are mostly open for $\ZZ$-algebras and local rings of mixed characteristic. Thanks to the perfectoid spaces techniques of Sch\"{o}lze \cite{Scholze} as applied to commutative algebra by Andr\'{e} and Bhatt, a major advance has happened recently \cite{AndreDirectSummand,BhattDirectSummand}. Ma and Schwede have shown that a theorem of Ein-Lazersfeld-Smith~\cite{ELS} and Hochster-Huneke~\cite{HH} on the uniform containment of symbolic and ordinary powers of ideals holds for regular rings of mixed characteristic \cite{MaSchwedeSymbPowers}.

In this paper, we are interested in generalizing another classical result on symbolic powers to the case of mixed characteristic and smooth $\mathbb{Z}$-algebras: the Zariski-Nagata theorem, which establishes that symbolic powers of prime ideals can be described using differential operators.

Zariski's main lemma on holomorphic functions \cite{Zariski}, together with work by Nagata \cite[p. 143]{Nagata}, states that if $P$ is a prime ideal in a polynomial ring over a field, then
$$P^{(n)} = \bigcap_{\substack{\m \supseteq P \\ \m \textrm{ maximal}}} \m^n.$$
This result was later refined by Eisenbud and Hochster \cite{EisenbudHochster}, and can be rephrased using differential powers of ideals, a fact which was well-known in characteristic $0$ and extended to perfect fields; see \cite{SurveySP}. More precisely, if $R$ is a smooth algebra over a perfect field $K$, and $Q$ is a prime ideal in $R$, the Zariski-Nagata theorem states that the $n$-th symbolic power $Q^{(n)}$ of $Q$ consists of the elements in $R$ that are taken inside $Q$ by every $K$-linear differential operator of order at most $n-1$.

%

Rather than using perfectoid techniques, our generalization of Zariski-Nagata makes use of a different arithmetic notion of derivative, the notion of a $p$-derivation, defined by Joyal \cite{Joyal} and Buium \cite{Buium1995} independently. From a commutative algebra point of view, $p$-derivations are rather exotic maps from a ring to itself --- in particular, they are not even additive --- but they do have many applications to arithmetic geometry, such as in \cite{Buium-ADE,Buium-DCI,Borger}. To the best of our knowledge, this is the first application of $p$-derivations to commutative algebra.

While our results cover a more general setting, let us describe the case where $R = A \left[ x_1, \ldots, x_n \right]$, where $A$ denotes the integers $\mathbb{Z}$ or the $p$-adic integers $\mathbb{Z}_p$. Given a prime ideal $Q$ in $R$, we study two different types of differential powers associated to $Q$. The first one is defined just in terms of differential operators, as in the statement of the Zariski-Nagata theorem. More precisely, given an integer $n \geq 1$, the $n$-th ($A$-linear) differential power of $Q$ is defined as
\[
\ds Q \dif{n}{A} = \{ f\in R \ | \ \partial(f)\in Q \ \text{for all} \ \partial \in D^{n-1}_{R|A} \},
\]
where $D^{n-1}_{R|A}$ is the set of $A$-linear differential operators on $R$ of order at most $n-1$ (see Definition~\ref{def-diff-ops}). If $Q$ does not contain any prime integer, then $Q\dif{n}{A}$ coincides with the $n$-th symbolic power of $Q$.
\begin{theoremx} (see Theorem \ref{thm_dif=symb})
Let $R=A[x_1,\dots,x_n]$, where $A = \ZZ$ or $A = \ZZ_p$, and $Q$ be a prime ideal of $R$ such that $Q \cap A = (0)$. Then $Q^{(n)} = Q\dif{n}{A}$ for all $n \geq 1$. 
\end{theoremx}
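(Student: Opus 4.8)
The plan is to invert the multiplicative set $W = A \setminus \{0\}$ and reduce to the classical Zariski--Nagata theorem over a field. Since $Q \cap A = (0)$ we have $Q \cap W = \emptyset$, so $W^{-1}R = K[x_1,\dots,x_n]$ is a polynomial ring over the fraction field $K = \operatorname{Frac}(A)$, which is $\QQ$ or $\QQ_p$, and $W^{-1}Q$ is a prime of $W^{-1}R$ contracting to $Q$. As $K$ has characteristic zero it is perfect, so the classical statement recalled in the introduction applies to the smooth $K$-algebra $W^{-1}R$ and yields $(W^{-1}Q)^{(n)} = (W^{-1}Q)\dif{n}{K}$. It then suffices to show that both the symbolic and the differential power of $Q$ are recovered from their counterparts over $K$ by contracting along $R \hookrightarrow W^{-1}R$.

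For symbolic powers this is routine: because $W \subseteq R \setminus Q$, localization gives $R_Q = (W^{-1}R)_{W^{-1}Q}$, and hence $Q^{(n)} = Q^n R_Q \cap R = (W^{-1}Q)^{(n)} \cap R$.

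The crux is the analogous identity for differential powers, $Q\dif{n}{A} = (W^{-1}Q)\dif{n}{K} \cap R$. Here I would invoke the explicit structure of differential operators on a polynomial ring: $D^{m}_{R|A}$ is a free $R$-module on the divided-power operators $\partial^{[\alpha]}$ with $|\alpha| \leq m$, and the same operators form a free $K[x]$-basis of $D^{m}_{W^{-1}R|K}$ (in characteristic zero $\partial^{[\alpha]} = \frac{1}{\alpha!}\partial^{\alpha}$), so that $W^{-1}D^{m}_{R|A} = D^{m}_{W^{-1}R|K}$. Granting this, if $f \in Q\dif{n}{A}$ then any $\partial' \in D^{n-1}_{W^{-1}R|K}$ has the form $w^{-1}\partial$ with $w \in W$ and $\partial \in D^{n-1}_{R|A}$, whence $\partial'(f) = w^{-1}\partial(f) \in W^{-1}Q$ and $f \in (W^{-1}Q)\dif{n}{K}$; conversely, if $f \in (W^{-1}Q)\dif{n}{K} \cap R$ and $\partial \in D^{n-1}_{R|A}$, then $\partial(f) \in R \cap W^{-1}Q = Q$ since $Q$ is $W$-saturated, so $f \in Q\dif{n}{A}$.

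Putting the three pieces together gives $Q^{(n)} = (W^{-1}Q)^{(n)} \cap R = (W^{-1}Q)\dif{n}{K} \cap R = Q\dif{n}{A}$, as desired. I expect the only real obstacle to be the localization-compatibility of differential powers: it rests entirely on the freeness and divided-power description of $D^{m}_{R|A}$ over the smooth base $A$, which simultaneously ensures that $A$-linear operators extend to $K$-linear ones and that every $K$-linear operator of order $\leq n-1$ is obtained by clearing a single denominator from $W$. Once that identification of differential-operator modules is in hand, the theorem is a formal consequence of it together with the field case.
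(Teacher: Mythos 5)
Your argument is correct, and its skeleton is the same as the paper's: invert $W = A \smallsetminus \{0\}$ (the paper phrases this as tensoring with $\QQ$, which produces the same ring $K[x_1,\dots,x_n]$), apply Zariski--Nagata over the characteristic-zero field $K$, and contract both the symbolic and the differential powers back along $R \hookrightarrow W^{-1}R$; the symbolic-power contraction $Q^{(n)} = (W^{-1}Q)^{(n)} \cap R$ is handled identically in both proofs. The genuine difference lies in how the key identity $Q\dif{n}{A} = (W^{-1}Q)\dif{n}{K} \cap R$ is justified. The paper invokes a general localization lemma for differential powers (Lemma~\ref{diff-powers-localize}, whose engine is Lemma~\ref{lemma_Grothendieck}, namely $W^{-1}D^m_{S|B} \cong D^m_{W^{-1}S|B}\cong D^m_{W^{-1}S|V^{-1}B}$, proved via finite generation and localization of the modules of principal parts from EGA); that lemma holds for any algebra essentially of finite type over the base, which is exactly the generality required for the paper's Theorem~\ref{thm_dif=symb} about essentially smooth $A$-algebras. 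You instead prove the needed identification by hand, using that $D^m_{R|A}$ and $D^m_{W^{-1}R|K}$ are free on the same divided-power operators $D_\alpha$ (available here via Lemma~\ref{EGA_existence_diff_operators}, since $R$ is a polynomial ring) and then clearing a single denominator from $W$. Your route is more elementary and self-contained, and it fully proves the stated theorem; the trade-off is that it is tied to the polynomial-ring hypothesis, whereas the paper's machinery extends with no extra work to the essentially smooth setting.
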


More generally, the previous result holds if $R$ is an essentially smooth algebra over $A$, where $A$ is either $\ZZ$ or a DVR of mixed characteristic.

If the prime ideal $Q$ contains a prime integer $p$, then differential powers are not sufficient to characterize symbolic powers, as one can see in Remark~\ref{example_partial(p)=0}. To overcome this issue, we combine differential operators and $p$-derivations to define the mixed differential powers of an ideal. Given a fixed $p$-derivation $\delta$, the $n$-th mixed differential power of $Q$ is the ideal given by
\[
Q\difM{n}=\{ f \in R \ | \ (\delta^a \circ \partial)(f) \in Q  \ \text{for all} \ \partial \in D^b_{R|A} \ \text{with} \ a+b\leq n-1\}.
\]
In principle, the mixed differential powers of an ideal depend on the choice of a $p$-derivation $\delta$. However, in our setting $Q\difM{n}$ is independent of the choice of the $p$-derivation (see Corollary~\ref{mixed powers independent of delta}). This new notion of mixed differential powers allows us to characterize symbolic powers of prime ideals that contain a given integer $p$.

\begin{theoremx} (see Theorem \ref{thm_mdiff=symb})
Let $R=A[x_1,\dots,x_n]$, where $A = \ZZ$ or $A = \ZZ_p$, and $Q$ be a prime ideal of $R$ such that $Q \cap A = (p)$, for a prime $p$. Then $Q^{(n)} = Q\difM{n}$ for all $n \geq 1$.
\end{theoremx}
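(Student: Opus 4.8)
The plan is to reduce the statement to a computation in the completed local ring at $Q$ and there to separate two independent directions of vanishing: the ``$p$-direction,'' which will be governed by the $p$-derivation $\delta$, and the ``$x$-directions,'' which will be governed by the ordinary $A$-linear differential operators and hence by the characteristic-$p$ Zariski--Nagata theorem. First I would check that both sides of the desired equality are compatible with localization and completion: using the results established earlier on the behaviour of $D^{b}_{R|A}$ and of $\delta$ under these operations (and Corollary~\ref{mixed powers independent of delta}, which frees us to fix the standard $\delta$), it suffices to prove, for $S = \widehat{R_Q}$ with maximal ideal $\widehat{\m}$, that for every $f \in S$ one has $f \in \widehat{\m}^n$ if and only if $(\delta^a \circ \partial)(f) \in \widehat{\m}$ for all $\partial \in D^b_{R|A}$ with $a + b \le n-1$. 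Here we use that $Q^{(n)} = Q^n R_Q \cap R = \widehat{\m}^n \cap R$ by faithful flatness, together with the fact that $R$ is (essentially) smooth over $A$, so that $S$ is a regular local ring of mixed characteristic $(0,p)$ in which $p$ is part of a regular system of parameters; thus $S$ is unramified and $p, y_1, \dots, y_{d-1}$ is a regular system of parameters, where the $y_i$ lift a regular system of parameters of $S/pS = \widehat{(R/pR)_{\ov{Q}}}$.

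The containment $Q^{(n)} \subseteq Q\difM{n}$ is the easy half. I would prove the order estimates $\partial(\widehat{\m}^k) \subseteq \widehat{\m}^{k-b}$ for $\partial$ of order $b$ (standard for differential operators on a smooth algebra) and, crucially, $\delta(\widehat{\m}^k) \subseteq \widehat{\m}^{k-1}$ for the $p$-derivation. The latter does not follow from additivity --- which $\delta$ lacks --- but from the two defining identities: the product rule $\delta(gh) = g^p\delta(h) + h^p \delta(g) + p\,\delta(g)\delta(h)$ shows, by induction using the hypotheses $\delta(g)\in\widehat{\m}^{a-1}$, $\delta(h)\in\widehat{\m}^{b-1}$, that $\delta(gh)\in\widehat{\m}^{a+b-1}$ when $g \in \widehat{\m}^a$, $h \in \widehat{\m}^b$, while the additivity defect $\delta(g+h) - \delta(g) - \delta(h)$ is an integral combination of monomials $g^i h^{p-i}$ ($1 \le i \le p-1$), hence lies in $\widehat{\m}^{p\min(a,b)} \subseteq \widehat{\m}^{a+b-1}$; an induction on $k$ over generators of $\widehat{\m}^k$ then gives the claim. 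Granting these, a mixed operator $\delta^a \circ \partial$ of total order $a+b \le n-1$ carries $\widehat{\m}^n$ into $\widehat{\m}^{\,n-(a+b)} \subseteq \widehat{\m}$, which is exactly the assertion $Q^{(n)} \subseteq Q\difM{n}$.

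For the reverse containment I would decouple the two directions via the $p$-adic expansion. Writing $f = \sum_{j \ge 0} p^j\, \widetilde{f_j}$ with $\widetilde{f_j}$ chosen lifts of elements of $(R/pR)_{\ov{Q}}$, unramifiedness gives $\operatorname{ord}_{\widehat{\m}}(f) = \min_{j}\bigl(j + \operatorname{ord}_{\widehat{\m}}(\ov{\widetilde{f_j}})\bigr)$, so that $f \in \widehat{\m}^n$ if and only if $\ov{\widetilde{f_j}} \in \ov{Q}^{(n-j)}$ for every $j < n$. The point is that $\delta^j$ detects the $j$-th component: using $\delta(p) \equiv 1 \pmod{\widehat{\m}}$ (for the standard $\delta$ one has $\delta(p) = 1 - p^{p-1}$) together with the product and additivity identities, one shows by induction on $j$ that $\delta^j(f) \equiv (\text{unit}) \cdot \widetilde{f_j} \pmod{(p) + (\text{terms of strictly higher }\widehat{\m}\text{-order})}$. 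Combining this with the classical Zariski--Nagata theorem for the smooth $\FF_p$-algebra $R/pR$ --- which, as recalled in the introduction, identifies $\ov{Q}^{(n-j)}$ with the set of elements sent into $\ov{Q}$ by all $\partial \in D^{\le n-j-1}_{(R/pR)\mid \FF_p}$ --- shows that the conditions ``$\ov{\widetilde{f_j}} \in \ov{Q}^{(n-j)}$ for all $j<n$'' are precisely the conditions ``$(\delta^j \circ \partial)(f) \in Q$ for all $j + \operatorname{ord}(\partial) \le n-1$.'' Hence $f \notin Q^{(n)}$ forces some such mixed operator to take $f$ outside $Q$, giving $Q\difM{n} \subseteq Q^{(n)}$.

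The main obstacle is the extraction lemma of the third paragraph: because $\delta$ is neither additive nor $A$-linear, the assertion that $\delta^j$ recovers the $j$-th $p$-adic component holds only modulo genuinely lower-order error terms, and one must check that these errors, as well as the commutators of $\delta$ with the $A$-linear differential operators $\partial$, never exceed the total-order budget $a+b \le n-1$. I expect to organize this as a simultaneous induction on $n$ in which the already-proved estimate $\delta(\widehat{\m}^k) \subseteq \widehat{\m}^{k-1}$ guarantees that each correction term lands in a strictly deeper symbolic power and is therefore absorbed by the inductive hypothesis; care is also needed because the residue field $\kappa(Q)$ may be a nontrivial extension of $\FF_p$, which is exactly why the $x$-directions must be handled by the full differential-operator theorem rather than by naive coordinatewise differentiation.
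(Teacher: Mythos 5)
There is a genuine gap, and it sits at the core of your argument for the hard containment $Q\difM{n} \subseteq Q^{(n)}$: the extraction lemma is false. For any $p$-derivation on a ring of characteristic zero, $\delta$ restricts to the Fermat operator on $\ZZ$, so $\delta(p)=1-p^{p-1}$, and the product rule gives $\delta(pg) = p^p\delta(g)+g^p\delta(p)+p\,\delta(p)\delta(g) \equiv g^p \pmod{p}$. Thus $\delta^j$ applied to the $j$-th $p$-adic piece $p^j\widetilde{f_j}$ returns, modulo $p$, not a unit times $\widetilde{f_j}$ but (up to units and error terms) $\widetilde{f_j}^{\,p^j}$: there is an unavoidable Frobenius twist. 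This twist multiplies the $\ov{\m}$-order of the component by $p^j$ and destroys exactly the information you need downstream: a differential operator of order at most $n-j-1$ applied to an element of order $p^j\cdot\mathrm{ord}(\ov{\widetilde{f_j}})$ stays inside $\ov{\m}$ as soon as $\mathrm{ord}(\ov{\widetilde{f_j}}) \geq (n-j)/p^j$, so you cannot certify $\ov{\widetilde{f_j}} \notin \ov{\m}^{\,n-j}$ this way. Worse, your detection scheme (first $\delta^j$ to extract, then $\partial$ to differentiate) produces operators of the form $\partial\circ\delta^j$, and the paper's own example shows that these are provably insufficient no matter how commutator errors are organized: for $R=\ZZ_p[x]$, $Q=(p,x)$, $\delta$ induced by $\phi(x)=x^p$, and $f=px$, every composition $\partial\circ\delta^a$ with $a+\mathrm{ord}(\partial)\leq 2$ maps $px$ into $Q$, yet $px\notin Q^{(3)}=Q^3$; only the oppositely ordered operator $\delta\circ\frac{d}{dx}$ detects this. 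Your final displayed equivalence is written with $\delta^j\circ\partial$ ($\partial$ applied first), but the mechanism you describe yields the other order, and since $\delta$ is neither additive nor linear there is no commutator calculus converting one into the other within the order budget; this is not a technical wrinkle to be absorbed by induction, it is where the proof breaks. The paper's proof composes in the opposite order precisely for this reason: writing $f$ of order $s<n$ in $R_Q$ as a sum of terms $u_ip^{s-|\alpha_i|}y^{\alpha_i}$ plus $g\in\m^{s+1}$, it first applies a single monomial operator $D_\alpha$ with $|\alpha|$ maximal (available by Lemma~\ref{free-basis} and Lemma~\ref{EGA_existence_diff_operators}), chosen so that $D_\alpha(f)\in p^t+\m\derp{t+1}$ with $t=s-|\alpha|$, and only then applies $\delta^t$, using that $\m\derp{t+1}$ is an ideal not containing $p^t$ (Remark~\ref{p-der-p-powers}, Proposition~\ref{prop_pdiff_power}).

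Two further problems, less central but real. First, invoking Corollary~\ref{mixed powers independent of delta} to normalize $\delta$ is circular: in the paper that corollary is deduced from Theorem~\ref{thm_mdiff=symb}, the very statement being proved; one must argue for an arbitrary $\delta$, which the paper does using only $\delta(p^t)\in(p)^{t-1}\smallsetminus(p)^t$, valid for every $p$-derivation. Second, your reduction passes to $\widehat{R_Q}$, but compatibility of $D^b_{R|A}$ and of mixed differential powers with completion is never established (and is not needed): the paper avoids completion entirely by noting that both $Q^{(n)}$ and $Q\difM{n}$ are $Q$-primary (Proposition~\ref{prop_mix_diff_power}) and localizing only (Lemmas~\ref{diff-powers-localize}, \ref{pdiff-powers-localize}, \ref{mdiff-powers-localize}); relatedly, the order formula for your $p$-adic expansion depends on the choice of lifts and fails for careless ones (e.g., lifting $0$ to $py$). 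For what it is worth, your first two paragraphs --- the reduction to the local case and the containment $Q^{(n)}\subseteq Q\difM{n}$ via the order estimates for $\partial$ and $\delta$ --- are correct and match the paper's Propositions~\ref{prop_ordinary_dif}, \ref{prop_pdiff_power}, and \ref{prop_mix_diff_power}.
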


More generally, we show this holds for $R$ an essentially smooth algebra over $A$, where $A$ is either $\ZZ$ or a DVR with uniformizer $p$, as long as $R$ has a $p$-derivation and $A/pA$ satisfies some additional assumptions (e.g., $A/pA$ is perfect).

\section{Background}

\subsection{Essentially smooth algebras}

Throughout, we say that a ring $S$ is \emph{smooth} 
over a subring $B$ if the inclusion map $B\rightarrow S$ is formally smooth 
and $S$ is a finitely generated $B$-algebra. We say that $S$ is \emph{essentially smooth} 
over $B$ if the inclusion map $B\rightarrow S$ is formally smooth 
 and $S$ is a localization of a finitely generated $B$-algebra (i.e., $S$ is essentially of finite type over $B$). 

\begin{lemma}\label{free-basis}
	Let either 
	\begin{enumerate}
		\item\label{lem-case-1} $A=k$ be a field, or
		\item\label{lem-case-2} $A$ be $\ZZ$ or a DVR with uniformizer $p \in \ZZ$, and $k=A/pA$.
	\end{enumerate}
 Let $(R,\m,K)$ be a local ring that is essentially smooth over $A$, and suppose that $p\in \m$. Assume that the field extension $k \to K$ is separable. If $y_1,\dots,y_s$ is a minimal generating set of $\m$ in Case~(\ref{lem-case-1}) or $p,y_1,\dots,y_s$ is a minimal generating set of $\m$ in Case~(\ref{lem-case-2}), then there is a free basis for $\Omega_{R|A}$ that contains $dy_1,\dots,dy_s$.
\end{lemma}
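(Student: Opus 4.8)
The plan is to reduce the statement to a single linear-independence assertion over the residue field. Since $R$ is essentially smooth over $A$, the module $\Omega_{R|A}$ is a finitely generated projective $R$-module, hence free because $R$ is local; call its rank $d$. By the standard basis-extension lemma for finitely generated free modules over a local ring, a collection of elements of such a module completes to a free basis as soon as their images in $\Omega_{R|A}\otimes_R K = \Omega_{R|A}/\m\Omega_{R|A}$ are $K$-linearly independent. So it suffices to prove that the images of $dy_1,\dots,dy_s$ in $\Omega_{R|A}\otimes_R K$ are linearly independent over $K$.

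In Case~(\ref{lem-case-1}), where $A=k$ is a field, I would use the second fundamental (conormal) exact sequence attached to $k\to R\to K=R/\m$,
\[
\m/\m^2 \xrightarrow{\ \delta\ } \Omega_{R|k}\otimes_R K \longrightarrow \Omega_{K|k}\longrightarrow 0,
\]
where $\delta(\overline{f})=df\otimes 1$. The key input is that, because $R$ is formally smooth over $k$ and the residue field extension $k\to K$ is separable (so $K$ is also formally smooth over $k$), this sequence is in fact left exact, i.e. $\delta$ is injective; this is the standard behaviour of the module of differentials in the presence of a separable residue field extension, and follows formally from the vanishing of the relevant terms in the transitivity triangle of the cotangent complex of $k\to R\to K$. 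Since $y_1,\dots,y_s$ is a minimal generating set of $\m$, the classes $\overline{y_1},\dots,\overline{y_s}$ form a $K$-basis of $\m/\m^2$, and applying the injective map $\delta$ shows that $dy_1\otimes 1,\dots,dy_s\otimes 1$ are linearly independent in $\Omega_{R|k}\otimes_R K$, as needed.

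In Case~(\ref{lem-case-2}) the naive conormal sequence over $A$ fails to be left exact, and this is the main obstacle: since $p\in A$ we have $dp=0$ in $\Omega_{R|A}$, so $\overline{p}\in\m/\m^2$ lies in the kernel of $\delta$ and no injectivity statement over $A$ can hold. I would circumvent this by passing to the special fibre. Set $\overline{R}=R/pR$ and $k=A/pA$; since both formal smoothness and essential finite type are stable under base change, $\overline{R}$ is essentially smooth over the field $k$, it is local with residue field $K$, and its maximal ideal $\overline{\m}=\m/pR$ is minimally generated by $\overline{y_1},\dots,\overline{y_s}$ (the class $\overline{p}$ is killed, dropping the number of generators from $s+1$ to $s$). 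Because Kähler differentials commute with base change, $\Omega_{\overline{R}|k}\cong \Omega_{R|A}\otimes_R\overline{R}$, and tensoring once more with $K$ yields a canonical identification $\Omega_{\overline{R}|k}\otimes_{\overline{R}}K\cong\Omega_{R|A}\otimes_R K$ under which $d\overline{y_i}\otimes 1$ corresponds to $dy_i\otimes 1$.

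Applying Case~(\ref{lem-case-1}) to the field $k$, the local ring $\overline{R}$, the separable extension $k\to K$, and the minimal generators $\overline{y_1},\dots,\overline{y_s}$, the classes $d\overline{y_1},\dots,d\overline{y_s}$ are linearly independent in $\Omega_{\overline{R}|k}\otimes_{\overline{R}}K$; transporting this along the identification above, $dy_1\otimes 1,\dots,dy_s\otimes 1$ are linearly independent in $\Omega_{R|A}\otimes_R K$, and the basis-extension lemma then produces a free basis of $\Omega_{R|A}$ containing $dy_1,\dots,dy_s$. The only genuinely delicate point is the left-exactness of the conormal sequence in Case~(\ref{lem-case-1}), where separability of the residue field extension is essential; everything else is formal manipulation with base change and Nakayama's lemma.
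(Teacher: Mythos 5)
Your proof is correct and takes essentially the same route as the paper: both reduce the statement to $K$-linear independence of the images $dy_i\otimes 1$ in $\Omega_{R|A}\otimes_R K$ (using freeness of $\Omega_{R|A}$ over the local ring $R$), settle Case~(\ref{lem-case-1}) by the left-exactness of the conormal sequence for $k\to R\to K$ granted by separability (Matsumura, Theorem~25.2), and settle Case~(\ref{lem-case-2}) by reducing modulo $p$ to the essentially smooth $k$-algebra $R/pR$, where $\Omega_{R|A}\otimes_R K$ is unchanged. The paper packages the mod-$p$ reduction as a commutative diagram and identifies the kernel of $\m/\m^2\to\Omega_{R|A}\otimes_R K$ as the span of the class of $p$, whereas you apply Case~(\ref{lem-case-1}) directly to the minimal generators of $\overline{\m}$; these are the same argument in different words (and your extra hypothesis that $R$ be formally smooth over $k$ in Case~(\ref{lem-case-1}) is harmless but unnecessary, since separability of $K$ over $k$ alone gives the injectivity).
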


\begin{proof} 
	By \cite[Theorem~25.2]{Matsumura}, there is a well-defined map $\varrho: \m / \m^2 \to \Omega_{R|A} \otimes_R K$ given by $[x] \mapsto dx\otimes 1$. Since, by the essential smoothness hypothesis, the $R$-module $\Omega_{R|A}$ is free, it suffices to show that $\varrho(y_1),\dots,\varrho(y_s)$ are $K$-linearly independent in $\Omega_{R|A}\otimes_R K$.
	
In Case~(\ref{lem-case-1}), this follows from \cite[Theorem~25.2]{Matsumura}, applied to the maps $k\to R \to R/\m$. 

In Case~(\ref{lem-case-2}), consider the commutative diagram given by
\[ \xymatrix{ \m / \m^2 \ar[r]^-{\varrho} \ar[d]_-\pi & \Omega_{R|A}\otimes_R K \ar[d]^-\cong \\ \overline{\m} / \overline{\m}^2 \ar[r]^-{\varrho} & \Omega_{\overline{R}|\overline{A}}\otimes_{\overline{R}} K, }\]
where the vertical maps are given by reduction modulo $p$, and $(\overline{R},\overline{\m},K)=(R/pR, \m/pR, K)$. By \cite[Theorem~25.2]{Matsumura}, the map $\varrho$ in the second row is injective. The kernel of the vertical map $\pi$ is generated by the class of $p$ modulo $\m^2$, and the vertical map on the right side is an isomorphism, since $p=0$ in $K$. It follows that the kernel of the map $\varrho$ in the first row is generated by the class of $p$ modulo $\m^2$, and the result follows.
	\end{proof}

\subsection{Differential operators} We now review some results regarding differential operators that we use in the rest of the paper. A general reference for differential operators is \cite[Chapter~16]{EGA-IV}; specific references to the facts we need are given below.

\begin{definition}\label{def-diff-ops}\cite[Section~16.8]{EGA-IV} Let $B\rightarrow S$ be a map of rings. The \emph{$B$-linear differential operators on $S$ of order $n$} are defined inductively as follows:
	\begin{itemize}
		\item $D^0_{S|B}=\Hom_S(S,S)\subseteq \Hom_B(S,S)$.
		\item $D^n_{S|B}=\big\{\delta\in \Hom_B(S,S) \ | \ [\delta,f]\in D^{n-1}_{S|B} \ \text{for all} \ f\in D^0_{S|B} \big\}$.
	\end{itemize}
\end{definition}

\begin{lemma}\label{EGA_existence_diff_operators}
	Let $B\rightarrow S$ be a formally smooth map of rings. Suppose that $\Omega_{S|B}$ is free, e.g., $S$ is local, and let $\{dz_{1}, \ldots, dz_{\Lambda} \}$ be a free basis for $\Omega_{S|B}$. Then there exists a family of differential operators $\{D_\alpha\}_{\alpha \in \NN^{\Lambda}}$ such that
\begin{itemize}
\item $D_\alpha(z^\beta)=\binom{\beta}{\alpha}z^{\beta-\alpha}$ for all $\beta\in \NN^{\Lambda}$ with $\beta_i\geq \alpha_i$ for all $i$, and 
\item $D_\alpha(z^\beta)=0$ for all $\beta\in \NN^{\Lambda}$ with $\beta_i < \alpha_i$ for some $i$.
\end{itemize}
The $S$-module $D^n_{S|B}$ is a free $S$-module generated by $\{D_\alpha: |\alpha|\leq n\}$ for each $n$.
\end{lemma}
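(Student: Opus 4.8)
The plan is to realize differential operators as the dual of the module of principal parts, and to exhibit an explicit basis of the latter coming from formal smoothness. Write $\Delta = \ker(S \otimes_B S \to S)$ for the kernel of the multiplication map, and let $P^n_{S|B} = (S \otimes_B S)/\Delta^{n+1}$ be the module of $n$-th order principal parts, viewed as an $S$-module via the left tensor factor. Recall from \cite[Section~16.8]{EGA-IV} that $s \mapsto \overline{1 \otimes s}$ defines a differential operator $d^n \colon S \to P^n_{S|B}$ of order $\leq n$, and that precomposition $u \mapsto u \circ d^n$ gives an isomorphism of $S$-modules $\Hom_S(P^n_{S|B}, S) \xrightarrow{\sim} D^n_{S|B}$. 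Thus it suffices to produce a free basis of $P^n_{S|B}$ and to dualize it.

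First I would use formal smoothness to identify the associated graded ring $\gr_\Delta(S \otimes_B S) = \bigoplus_k \Delta^k/\Delta^{k+1}$ with the symmetric algebra $\Sym_S(\Omega_{S|B})$, the degree-one piece being $\Delta/\Delta^2 = \Omega_{S|B}$ \cite[Section~16]{EGA-IV}. Since $\Omega_{S|B}$ is free with basis $dz_1, \ldots, dz_\Lambda$, each graded piece $\Delta^k/\Delta^{k+1} = \Sym^k_S(\Omega_{S|B})$ is free on the degree-$k$ monomials in the $dz_i$. Setting $\xi_i = 1 \otimes z_i - z_i \otimes 1 \in \Delta$, whose class in $\Delta/\Delta^2$ is $dz_i$, it follows by lifting these graded bases through the finite filtration $\Delta^{n+1}/\Delta^{n+1} \subseteq \cdots \subseteq \Delta^0/\Delta^{n+1} = P^n_{S|B}$ (whose successive quotients are free, hence the filtration splits) that $P^n_{S|B}$ is a free $S$-module with basis $\{\xi^\alpha : |\alpha| \leq n\}$, where $\xi^\alpha = \prod_i \xi_i^{\alpha_i}$.

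Next I would compute the Taylor expansion of $d^n$ on monomials. In the commutative ring $S \otimes_B S$ one has $1 \otimes z_i = (z_i \otimes 1) + \xi_i$, and $z_i \otimes 1$ acts as the scalar $z_i$ under the left $S$-structure; binomially expanding and discarding the terms lying in $\Delta^{n+1}$ gives, in $P^n_{S|B}$,
\[
d^n(z^\beta) = \overline{1 \otimes z^\beta} = \sum_{\alpha \leq \beta,\ |\alpha| \leq n} \binom{\beta}{\alpha}\, z^{\beta - \alpha}\, \xi^\alpha .
\]
Defining $D_\alpha$ to be the operator corresponding under the isomorphism above to the dual basis vector $(\xi^\alpha)^\ast \in \Hom_S(P^n_{S|B}, S)$, that is $D_\alpha = (\xi^\alpha)^\ast \circ d^n$, and reading off the coefficient of $\xi^\alpha$ in the displayed formula, we obtain exactly $D_\alpha(z^\beta) = \binom{\beta}{\alpha} z^{\beta - \alpha}$ when $\beta_i \geq \alpha_i$ for all $i$, and $D_\alpha(z^\beta) = 0$ otherwise. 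Since $\xi^\alpha \in \Delta^{|\alpha|}$, the operator $D_\alpha$ does not depend on the ambient $n \geq |\alpha|$ used to define it, so the family $\{D_\alpha\}_{\alpha \in \NN^{\Lambda}}$ is well defined. Finally, because $\{\xi^\alpha : |\alpha| \leq n\}$ is a free basis of $P^n_{S|B}$, its dual basis is a free basis of $\Hom_S(P^n_{S|B}, S)$, and transporting it along the isomorphism shows that $D^n_{S|B}$ is free with basis $\{D_\alpha : |\alpha| \leq n\}$.

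I expect the main obstacle to be the input from formal smoothness in the second step: the identification $\gr_\Delta(S \otimes_B S) \cong \Sym_S(\Omega_{S|B})$, equivalently the freeness of $P^n_{S|B}$ with the stated monomial basis. This is precisely where the smoothness hypothesis, together with the freeness of $\Omega_{S|B}$, is used. Once this is granted, the remaining Taylor-expansion and duality steps are purely formal.
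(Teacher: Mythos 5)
Your proposal is correct and follows essentially the same route as the paper: the paper's proof consists only of citing EGA~IV, Theorem~16.10.2 (formal smoothness implies differential smoothness, i.e., $\Sym_S(\Omega_{S|B})\cong\gr_{\Delta}(S\otimes_B S)$) and Theorem~16.11.2 (which is exactly the statement of the lemma), and your argument unpacks the proof of 16.11.2 --- principal parts, the splitting of the $\Delta$-adic filtration, the Taylor expansion, and duality with $D^n_{S|B}\cong\Hom_S(P^n_{S|B},S)$ --- while black-boxing precisely the 16.10.2 input. So you have reconstructed the EGA argument the paper invokes, with the role of the smoothness hypothesis identified correctly.
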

\begin{proof} By \cite[Theorem~16.10.2]{EGA-IV}, $S$ is differentially smooth over $B$. Then, the statement above is the content of \cite[Theorem~16.11.2]{EGA-IV}.
	\end{proof}

\subsection{$p$-Derivations}
Fix a prime $p\in \ZZ$, and let $S$ be a ring on which $p$ is a nonzerodivisor.
The following operators were introduced independently in \cite{Joyal} and \cite{Buium1995}:

\begin{definition} \label{def_p-derivation} We say that a set-theoretic map $\delta:S \to S$ is a \emph{$p$-derivation} if $\phi_p(x) := x^p+p\delta(x)$ is a ring homomorphism. Equivalently, $\delta$ is a $p$-derivation if $\delta(1)=0$ and $\delta$ satisfies the following identities for all $x,y\in S$:
\begin{equation}
\label{prod}
\ds \delta(xy) = x^p\delta(y) + y^p \delta(x) + p\delta(x)\delta(y)
\end{equation}
and
\begin{equation}
\label{sum}
\ds \delta(x+y) = \delta(x) + \delta(y) + \mathcal{C}_p(x,y),
\end{equation}
where $\mathcal{C}_p(X,Y) = \frac{X^p+Y^p-(X+Y)^p}{p}\in \ZZ[X,Y]$.  
If $\delta$ is a $p$-derivation, we set $\delta^a$ to be the $a$-fold self-composition of $\delta$; in particular, $\delta^0$ is the identity. We set $\Der_p(S)$ to be the set of $p$-derivations on~$S$. For all positive integers $n$, we let
\[
\Der^{n}_p(S)={\{ \delta_1 \circ \cdots \circ \delta_t \ | \ \delta_i \in \Der_p(S) \ \text{for all $i$, and} \ t \leq n \}}.
\] 
\end{definition}

For a thorough development of the theory of $p$-derivations, see \cite{Buium-ADE}.

	Note that having a $p$-derivation on $S$ is equivalent to having a lift $\phi\!: S \rightarrow S$ of the Frobenius map $S/pS \rightarrow S/pS$. Indeed, it follows from the definition that if $\phi\!:S \rightarrow S$ is a map such that the induced map $\bar{\phi}:S/pS\rightarrow S/pS$ is the Frobenius map, then $\delta(x)=\frac{\phi(x)-x^p}{p}$ is a $p$-derivation. For example, if $R=\ZZ[x_1,\dots,x_n]$, then the map that sends a polynomial $f(x_1,\dots,x_n)$ to 
	$$\frac{f(x_1^p,\dots,x_n^p)-f(x_1,\dots,x_n)^p}{p}$$
	is a $p$-derivation.

However, not every ring admits a $p$-derivation. See \cite{Dupuy}, or consider the following example:

\begin{example} Let $S=\ZZ_p[x_1,\dots,x_n]$, and $R=S/(p-f)$, where $f\in (x_1,\dots,x_n)^2$. Suppose that there is some $p$-derivation $\delta$ on $R$. Then, considering $p=f$ as elements of $R$, by Proposition~\ref{prop_pdiff_power}~(\ref{prop_pdiff_power_1}) below, $\delta(f)\in (p,x_1,\dots,x_n)R$. However, by Remark~\ref{p-der-p-powers}, $\delta(p)\in (\ZZ \smallsetminus p \ZZ)$, which yields a contradiction.
\end{example} 

However, we do have the following:

\begin{proposition}\label{existence-p-der} A ring $S$ admits a $p$-derivation in each of the following cases:
	\begin{enumerate}
		\item\label{p-der-Z} $S=\ZZ$,
		\item\label{p-der-Witt} $S=W(C)$ is the Witt vectors over $C$ for some perfect ring of positive characteristic,
		\item\label{p-der-poly} $S$ is a polynomial ring over a ring $B$ that admits a $p$-derivation, or
		\item\label{p-der-complete} $S$ is $p$-adically complete and formally smooth over a ring $B$ that admits a $p$-derivation.

	\end{enumerate}
Suppose also that $\delta$ is a $p$-derivation on $S$ and $Q$ is a prime of $S$ containing $p$. Then there exists a $p$-derivation $\widetilde{\delta}$ on $S' \supseteq S$ such that $\widetilde{\delta}({s})={\delta(s)}$ for all $s\in S$ when:

	\begin{enumerate}[\indent (a)]
	\item \label{p-der-localize} $S'=S_Q$, or
	\item  \label{p-der-completion} $S'=\widehat{S_Q}$.
	\end{enumerate}
In Case~(\ref{p-der-localize}), we write $\widetilde{\delta}=\delta_Q$ and in Case~(\ref{p-der-completion}), we write $\widetilde{\delta}=\widehat{\delta_Q}$.
\end{proposition}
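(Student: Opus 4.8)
The plan is to handle the existence cases (1)--(4) and the extension cases (a)--(b) separately, exploiting the equivalence noted just before the statement: a $p$-derivation on $S$ is the same data as a lift $\phi\colon S \to S$ of the Frobenius endomorphism of $S/pS$. I would phrase everything in terms of Frobenius lifts, since the ring-homomorphism property of $\phi_p(x) = x^p + p\delta(x)$ is much easier to propagate than the non-additive cocycle identities \eqref{prod} and \eqref{sum}.

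For the existence cases, I would argue as follows. Case~(\ref{p-der-Z}): on $\ZZ$, Frobenius on $\ZZ/p\ZZ$ is the identity, so the identity map $\phi = \mathrm{id}$ is a Frobenius lift, giving the $p$-derivation $\delta(x) = (x-x^p)/p$. Case~(\ref{p-der-Witt}): for $S = W(C)$ with $C$ perfect of characteristic $p$, the Witt-vector functoriality produces a canonical Frobenius lift $\phi = W(\mathrm{Frob}_C)$ reducing to Frobenius mod $p$; this is standard. Case~(\ref{p-der-poly}): given a $p$-derivation on $B$ with associated lift $\phi_B$, I extend to $S = B[x_1,\dots,x_m]$ by setting $\phi(x_i) = x_i^p$ and $\phi|_B = \phi_B$; since $\phi$ is a ring homomorphism agreeing with Frobenius on generators and on $B$ modulo $p$, its reduction mod $p$ is Frobenius, as displayed explicitly in the remark preceding the proposition. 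Case~(\ref{p-der-complete}): here $S$ is $p$-adically complete and formally smooth over $B$, and I want to lift the composite $S \to S/pS \xrightarrow{\mathrm{Frob}} S/pS$ through $S \to S/pS$. Formal smoothness of $B \to S$ lets me lift the $B$-algebra map $S \to S/p^nS$ (built from $\phi_B$ and Frobenius) compatibly across the tower $S/p^nS$, and $p$-adic completeness assembles these into an honest $\phi\colon S \to S$; this is the Hensel/formal-smoothness lifting argument, and it is the one place where some care is required.

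For the extension cases, suppose $\delta$ is a $p$-derivation on $S$ with associated lift $\phi$, and $Q \supseteq (p)$ is prime. Case~(\ref{p-der-localize}), $S' = S_Q$: I claim $\phi$ extends to $S_Q$. Because $Q$ contains $p$ and $\phi$ reduces to Frobenius mod $p$, for any $s \notin Q$ the image $\phi(s)$ is a unit modulo $Q$ (its class is $\overline{s}^p \neq 0$ in the domain $S/Q$), hence $\phi(s) \notin Q$, so $\phi$ sends the multiplicative set $S \smallsetminus Q$ into units of $S_Q$. By the universal property of localization, $\phi$ extends uniquely to a ring homomorphism $\phi_Q\colon S_Q \to S_Q$, and since it still reduces to Frobenius on $(S/pS)_Q = S_Q/pS_Q$, the formula $\delta_Q(s) = (\phi_Q(s) - s^p)/p$ gives the desired $p$-derivation restricting to $\delta$ on $S$. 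Case~(\ref{p-der-completion}), $S' = \widehat{S_Q}$: the map $\phi_Q$ is continuous for the $Q$-adic (equivalently, since $p \in Q$, compatible with the $p$-adic) topology because $\phi_Q(Q) \subseteq Q$, so it extends to the completion $\widehat{S_Q}$ by continuity, yielding $\widehat{\delta_Q}$.

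The main obstacle I expect is Case~(\ref{p-der-complete}): verifying that the successive lifts across $S/p^nS \to S/p^{n-1}S$ exist and can be chosen compatibly, so that they glue in the limit to a genuine endomorphism of $S$. This is exactly where formal smoothness and $p$-adic completeness must be combined, and it is the only step that is not a short formal manipulation; the remaining cases reduce to checking that a prescribed map is a ring homomorphism reducing to Frobenius mod $p$, which is routine given the Frobenius-lift reformulation.
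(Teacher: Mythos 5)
Your proposal is correct and follows essentially the same route as the paper: reformulating everything via Frobenius lifts, using the identity on $\ZZ$, Witt functoriality, $x_i \mapsto x_i^p$ for polynomial rings, the formal-smoothness lifting along the tower $S/p^nS$ assembled by $p$-adic completeness for case~(4), the universal property of localization (after checking $\phi(S \smallsetminus Q) \cap Q = \emptyset$) for case~(a), and $Q$-adic continuity from $\phi(Q) \subseteq Q$ for case~(b). The only cosmetic difference is that the paper verifies the mod-$p$ reduction of $\phi_Q$ by an explicit computation with fractions, whereas you argue it through the domain $S/Q$; both are routine and equivalent.
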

\begin{proof}
	As we have noted before, showing that a $p$-derivation exists is equivalent to proving that there exists a lift of Frobenius. Moreover, to verify that a $p$-derivation $\widetilde{\delta}$ extends $\delta$, it suffices to check that the associated lift of Frobenius extends the other. We verify for (1)--(4) that there is a lift of Frobenius.
	
	(\ref{p-der-Z}): The identity on $\ZZ$ is a lift of Frobenius.
	
	(\ref{p-der-Witt}): The Witt vectors admit a functorially induced Frobenius. 
		
	(\ref{p-der-poly}):  Extend a lift of Frobenius on $A$ by sending each variable to its $p$-th power.
	
	(\ref{p-der-complete}): Let $\alpha_1\!:S \rightarrow S/pS$ be the composition of the quotient map with the Frobenius map on $S/pS$. Since $S$ is formally smooth, there is a map $\alpha_2:S\rightarrow S/p^2 S$ such that $\alpha_1=\pi_2\circ\alpha_2$, where $\pi_2$ is the natural surjection of $S/p^2 S \to S/p S$. Inductively, by formal smoothness one obtains a family of maps $\alpha_i:S\rightarrow S/p^i S$ such that $\pi_i \circ \alpha_i = \alpha_{i-1}$. This compatible system of maps induces a map $\varprojlim \alpha_i: S\rightarrow \varprojlim S/p^i S \cong S$ that is a lift of the Frobenius.
	
	(\ref{p-der-localize}): Let $\Phi:S\to S$ be a lift of the Frobenius. We note that if $\Phi(t)\in Q$, then, since $\Phi(t)-t^p\in pS$ and $p\in Q$, we also have $t^p\in Q$, and hence $t\in Q$. It follows that $\Phi$ induces a map $\Phi_Q:S_Q \to S_Q$. Now, we claim that $\Phi_Q$ is a lift of the Frobenius as well. In fact, if $s/t\in S_Q$, observe that $\Phi_Q(s/t)-s^p/t^p \in p S_Q$. To see this, note that
	\[\Phi_Q\left(\frac{s}{t}\right)-\frac{s^p}{t^p} = \frac{\Phi(s) t^p - \Phi(t) s^p}{t^p \Phi(t)} = \frac{t^p(\Phi(s) - s^p) - s^p(\Phi(t)-t^p)}{t^p \Phi(t)}, \]
	where the numerator is a multiple of $p$, and the denominator is not.
	
	(\ref{p-der-completion}): Given a lift $\Phi$ of the Frobenius on $S$, to see that it extends to $\widehat{S_Q}$ it suffices to check that $\Phi(Q)\subseteq Q$. In fact, in this case, we have $\Phi(Q^n) \subseteq \Phi(Q)^n \subseteq Q^n$ for all positive integers $n$, since $\Phi$ is a ring homomorphism, and it follows that $\Phi$ is $Q$-adically continuous. To see that $\Phi(Q) \subseteq Q$, observe that, for $s\in Q$, $\Phi(s)\equiv s^p \ \mathrm{mod} \, pS$, so $\Phi(s)\in Q$, because $p \in Q$ by assumption.
\end{proof}

\begin{remark}\label{p-der-p-powers}
	Repeated application of Equation~\eqref{sum} shows that a $p$-derivation sends the prime ring of $R$ (i.e., the canonical image of $\ZZ$) to itself. If $R$ has characteristic zero, so that its prime ring is $\ZZ$, any $p$-derivation on $R$ restricts to a $p$-derivation on $\ZZ$. On the other hand, there is a unique $p$-derivation on $\ZZ$ given by the \emph{Fermat difference operator}: $\delta(n)=\frac{n-n^p}{p}$. In particular, when $R$ has characteristic zero, every $p$-derivation satisfies $\delta(p^n)=p^{n-1}-p^{pn-1} \in (p)^{n-1} \smallsetminus (p)^n$ for all $n \geqslant 1$.
	
	%
\end{remark}

\begin{remark}\label{Remark_Cp} Let $R$ be a ring, and $I,J \subseteq R$ be ideals. Let $\delta$ be a $p$-derivation. If $a \in I$ and $b \in J$, then $\delta(a+b) \equiv \delta(a) + \delta(b)$ modulo $IJ$. In fact, we have that
	\[
	\ds \mathcal{C}_p(a,b) = \frac{a^p+b^p-(a+b)^p}{p} = \sum_{i=1}^{p-1} \frac{{p \choose i}}{p} a^ib^{p-i} \in IJ,
	\]
	because $p \mid {p \choose i}$ for all $1 \leq i \leq p-1$. In particular, we have that $\mathcal{C}_p(a,b) \in (a) \cap (b)$. With similar considerations, one can show that if $a,b \in I$, then $\delta(a+b) \equiv \delta(a)+\delta(b)$ modulo $I^p$.
\end{remark}

\section{Results}
\subsection{Primes not containing $p$} In this subsection, we focus on differential and symbolic powers of prime ideals that do not contain any prime integer. To study symbolic powers of such ideals, we use differential operators.

\begin{definition}\cite{SurveySP}\label{def_dif} Let $S$ be a ring, $B$ be a subring of $S$, and $I$ be an ideal of $S$. The \emph{$n$-th ($B$-linear) differential power of $I$} is
\[ I\dif{n}{B} := \{ f\in S \ | \ \partial(f)\in I \ \text{for all} \ \partial \in D^{n-1}_{S|B} \}.\] 
\end{definition}

The following proposition is a generalization of \cite[Proposition~2.4]{SurveySP}.

\begin{proposition}  \label{prop_ordinary_dif}
Let $S$ be a ring, $B$ be a subring of $S$, and $\a$ an ideal of $S$. The following properties hold:
\begin{enumerate}
	\item \label{prop_ordinary_dif_1} $\a\dif{n}{B}$ is an ideal, and $\a\dif{n}{B} \subseteq \a$.
\item  \label{prop_ordinary_dif_1.5} $\a\dif{n+1}{B} \subseteq \a\dif{n}{B}$ for all $n$.
	\item \label{prop_ordinary_dif_2} For any $0 \leq t \leq n$, and any $\partial \in D^t_{S|B}$, we have $\partial(\a^n) \subseteq \a^{n-t}$. In particular, $\a^n \subseteq \a\dif{n}{B}$.
	\item \label{prop_ordinary_dif_3} If $Q$ is a prime ideal of $S$, and $\a$ is $Q$-primary, then $\a\dif{n}{B}$ is $Q$-primary.
	\item \label{prop_ordinary_dif_4} If $Q$ is a prime ideal of $S$, and $\a$ is $Q$-primary, then $\a^{(n)} \subseteq \a\dif{n}{B}$.
\end{enumerate}
\end{proposition}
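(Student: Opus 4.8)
The plan is to prove the five items in order, with (3) and (4) built on the earlier ones. The single workhorse throughout is the commutator characterization of order from Definition~\ref{def-diff-ops}: writing $\mu_s \in D^0_{S|B}$ for multiplication by $s \in S$, any $\partial \in D^j_{S|B}$ satisfies $\partial \circ \mu_s = \mu_s \circ \partial + [\partial,\mu_s]$ with $[\partial,\mu_s] \in D^{j-1}_{S|B}$ (using the convention $D^{-1}_{S|B}=0$); equivalently, $\partial(sf) = s\,\partial(f) + [\partial,\mu_s](f)$. For item (1), the inclusion $\a\dif{n}{B} \subseteq \a$ follows by applying the identity operator $\mu_1 \in D^0_{S|B} \subseteq D^{n-1}_{S|B}$. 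Closure under addition is immediate from the additivity of differential operators. Closure under multiplication by $s$ is precisely where the commutator identity enters: if $f \in \a\dif{n}{B}$ and $\partial \in D^{n-1}_{S|B}$, then $\partial(sf) = s\,\partial(f) + [\partial,\mu_s](f)$, where the first summand lies in $\a$ since $\a$ is an ideal and $\partial(f) \in \a$, and the second lies in $\a$ since $[\partial,\mu_s] \in D^{n-2}_{S|B} \subseteq D^{n-1}_{S|B}$. Item (1.5) is formal from $D^{n-1}_{S|B} \subseteq D^n_{S|B}$.

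For item (2), I would induct on the order $t$, and within each $t$ on the number of factors $n$, using the convention $\a^j = S$ for $j \leq 0$. The base $t=0$ holds because $D^0_{S|B}$ consists of multiplications, which preserve $\a^n$. For the inductive step, it suffices to treat a product $a\cdot g$ with $a \in \a$ and $g \in \a^{n-1}$, and expand $\partial(ag) = a\,\partial(g) + [\partial,\mu_a](g)$: the first term lies in $\a \cdot \a^{n-1-t} \subseteq \a^{n-t}$ by the inner induction on $n$ (same order $t$), and the second lies in $\a^{(n-1)-(t-1)} = \a^{n-t}$ by the outer induction on $t$ applied to $[\partial,\mu_a] \in D^{t-1}_{S|B}$. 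The final assertion $\a^n \subseteq \a\dif{n}{B}$ is the case $t = n-1$, since then $\partial(\a^n) \subseteq \a^{n-(n-1)} = \a$ for every $\partial \in D^{n-1}_{S|B}$.

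Items (3) and (4) then repackage the earlier results. For (3), combining (1) and (2) gives $\a^n \subseteq \a\dif{n}{B} \subseteq \a$, so taking radicals yields $\sqrt{\a\dif{n}{B}} = \sqrt{\a} = Q$. To show $\a\dif{n}{B}$ is primary I would prove that $xy \in \a\dif{n}{B}$ and $y \notin Q$ force $x \in \a\dif{n}{B}$, by induction on the order $j$ of $\partial \in D^{n-1}_{S|B}$: from $\partial(xy) = y\,\partial(x) + [\partial,\mu_y](x)$, the term $[\partial,\mu_y](x)$ lies in $\a$ by the inductive hypothesis (as $[\partial,\mu_y] \in D^{j-1}_{S|B}$) while $\partial(xy) \in \a$, so $y\,\partial(x) \in \a$; since $\a$ is $Q$-primary and $y \notin Q$ we conclude $\partial(x) \in \a$ (the base case $j=0$ being $y(sx) \in \a \Rightarrow sx \in \a$). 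Finally, (4) is formal: recalling $\a^{(n)} = \a^n S_Q \cap S$ and that $\sqrt{\a^n} = Q$ makes $\a^{(n)}$ the smallest $Q$-primary ideal containing $\a^n$, the fact that (2) and (3) exhibit $\a\dif{n}{B}$ as a $Q$-primary ideal containing $\a^n$ gives $\a^{(n)} \subseteq \a\dif{n}{B}$ upon localizing at $Q$ and contracting.

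The hard part will be the bookkeeping in (2): keeping the two nested inductions straight—order $t$ versus number of factors $n$—together with the negative-exponent convention, so that both terms in the expansion land in $\a^{n-t}$ simultaneously. The argument in (3) is the other delicate point, where the induction on operator order must isolate the factor $y$ at each stage so that the $Q$-primary hypothesis can be invoked; once these are in place, (1.5) and (4) are essentially immediate consequences.
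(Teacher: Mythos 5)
Your proposal is correct and follows essentially the same route as the paper: the commutator identity $\partial(sf)=s\,\partial(f)+[\partial,\mu_s](f)$ drives (1) and (2) via the same double induction (on the order $t$ and then on the number of factors), and (4) is deduced from (2), (3), and the minimality of $\a^{(n)}$ among $Q$-primary ideals containing $\a^n$, exactly as in the paper. The only difference is bookkeeping in (3): the paper inducts on $n$, using (1.5) and the $Q$-primariness of $\a\dif{n-1}{B}$ to handle the commutator term, whereas you induct on the order of the operator; both reduce to the same computation, namely that $y\,\partial(x)=\partial(xy)-[\partial,\mu_y](x)\in\a$ together with $Q$-primariness of $\a$ forces $\partial(x)\in\a$.
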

\begin{proof}
The proof of (\ref{prop_ordinary_dif_1}) and (\ref{prop_ordinary_dif_1.5}) is analogous to that of \cite[Proposition 2.4]{SurveySP}, where the same claim is made for the case when $B$ is a field. 

For part~(\ref{prop_ordinary_dif_2}), we first proceed by induction on $t \geq 0$. If $t=0$, then $\partial \in D^0_{S|B}$ is just multiplication by an element of $S$, and the statement is clear. If $t\geq 1$, we proceed by induction on $n -t \geq 0$. If $n=t$, then $\partial(\a^n) \subseteq \a^0 = S$ is trivial. By induction, assume that $\partial(\a^{n-1}) \subseteq \a^{n-1-t}$. To conclude the proof that $\partial(\a^n)\subseteq \a^{n-t}$, it suffices to show that $\partial(xy) \in \a^{n-t}$ for all $x \in \a$ and $y \in \a^{n-1}$. To see this, observe that $\partial(xy) = [\partial,x](y) + x \partial(y)$, with $[\partial,x] \in D^{t-1}_{S|B}$. By the inductive hypothesis on $t$, we have that $[\partial,x](y) \in \a^{n - 1-(t-1)} = \a^{n-t}$. Since $x \in \a$ and $y \in \a^{n-1}$, we also have $x \partial(y) \in \a\a^{n-1-t} = \a^{n-t}$, and the claim follows. In particular, this shows that $\partial(\a^n) \subseteq \a$ for all $\partial$ of order up to $n-1$, so that $\a^n \subseteq \a\dif{n}{B}$.

Part~(\ref{prop_ordinary_dif_4}) follows from (\ref{prop_ordinary_dif_3}), given that $\a^n \subseteq \a\dif{n}{B}$, and that $\a^{(n)}$ is the smallest $Q$-primary ideal that contains $\a^n$. 

To show (\ref{prop_ordinary_dif_3}), we first observe that $\a^n \subseteq \a\dif{n}{B} \subseteq \a$ by parts~(\ref{prop_ordinary_dif_1})~and~(\ref{prop_ordinary_dif_2}), so that $\sqrt{\a\dif{n}{B}}=Q$. To prove that $\a\dif{n}{B}$ is $Q$-primary, we proceed by induction on $n \geq 1$. The case $n=1$ is true by assumption, since $\a\dif{1}{B} = \a$. Let $xy \in \a\dif{n}{B}$, with $x \notin Q$. Observe that $xy \in \a\dif{n}{B} \subseteq \a\dif{n-1}{B}$ by part (\ref{prop_ordinary_dif_1.5}), and by the inductive hypothesis the latter ideal is $Q$-primary. Since $x \notin Q$, it follows that $y \in \a\dif{n-1}{B}$. Let $\partial \in D^{n-1}_{S|B}$, so that $[\partial,x] \in D^{n-2}_{S|B}$. It follows that $[\partial,x](y) \in \a$, by definition of $\a\dif{n-1}{B}$. On the other hand, we also have $\partial (xy) \in \a$, and thus $x \partial(y) = \partial(xy) - [\partial,x](y)  \in \a$. Using again that $x \notin Q$, and that $\a$ is $Q$-primary, it follows that $\partial(y) \in \a$. Since $\partial \in D^{n-1}_{S|B}$ was arbitrary, we conclude that $y \in \a\dif{n}{B}$, and thus $\a\dif{n}{B}$ is $Q$-primary.
\end{proof}

\begin{corollary}\label{fixing the mix up}
	In the context of Lemma~\ref{EGA_existence_diff_operators}, fix $\alpha$ such that $\alpha_{t+1} = \cdots = \alpha_\Lambda = 0$, for some $t \leqslant \Lambda$. Consider the ideal $I = \left( z_1, \ldots, z_t \right)$, and let $u \notin I$. Then:
	\begin{enumerate}
		\item $D_{\alpha} \left( u z^{\alpha} \right) \notin I$, and
		\item For all $\beta \neq \alpha$ with $\left| \beta \right| = \left| \alpha \right|$, $D_{\alpha} \left( u z^{\beta} \right) \in I$.
	\end{enumerate}
\end{corollary}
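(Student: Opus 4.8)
The plan is to work modulo $I$ and to exploit the fact that the operators $D_\alpha$ produced by Lemma~\ref{EGA_existence_diff_operators} form a Hasse--Schmidt (higher) derivation, so that they satisfy the divided-power Leibniz rule
$$D_\alpha(fg)=\sum_{\gamma+\delta=\alpha}D_\gamma(f)\,D_\delta(g)\qquad(f,g\in S).$$
Granting this, I would combine it with the monomial formula $D_\delta(z^\beta)=\binom{\beta}{\delta}z^{\beta-\delta}$ (read as $0$ unless $\delta\leq\beta$) to expand, for a monomial $z^\beta$ in $z_1,\dots,z_t$ (consistent with $\alpha$ being supported on $\{1,\dots,t\}$),
$$D_\alpha(u\,z^\beta)=\sum_{\gamma+\delta=\alpha}D_\gamma(u)\,D_\delta(z^\beta)=\sum_{\delta\leq\alpha,\,\delta\leq\beta}\binom{\beta}{\delta}\,D_{\alpha-\delta}(u)\,z^{\beta-\delta},$$
where I have set $\gamma=\alpha-\delta$, so that $\gamma\geq 0$ amounts to $\delta\leq\alpha$.

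Next I would reduce this sum modulo $I=(z_1,\dots,z_t)$. A summand survives only when $z^{\beta-\delta}\notin I$, that is, when $(\beta-\delta)_i=0$ for every $i\leq t$, equivalently $\delta_i=\beta_i$ for all $i\leq t$; note that each $\delta$ occurring is supported on $\{1,\dots,t\}$ since $\delta\leq\alpha$ and $\alpha_i=0$ for $i>t$. For part~(1), set $\beta=\alpha$: the survival condition forces $\delta=\alpha$, the unique remaining term is $\binom{\alpha}{\alpha}D_0(u)\,z^{0}=u$ (recall $D_0=\mathrm{id}$), and hence $D_\alpha(u\,z^\alpha)\equiv u\pmod I$; as $u\notin I$, this yields $D_\alpha(u\,z^\alpha)\notin I$. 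For part~(2), a surviving $\delta$ would satisfy $\delta_i=\beta_i\leq\alpha_i$ for all $i\leq t$; combined with $|\beta|=|\alpha|$ and the fact that $\alpha,\beta$ are supported on $\{1,\dots,t\}$, this forces $\beta=\alpha$, contradicting $\beta\neq\alpha$. Thus no summand survives and $D_\alpha(u\,z^\beta)\in I$. The degree hypothesis $|\beta|=|\alpha|$ is used precisely here: it rules out the case $\beta<\alpha$, in which the surviving term $D_{\alpha-\beta}(u)$ need not lie in $I$.

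The step that genuinely requires care, and the main obstacle, is justifying the Leibniz rule for the specific operators of Lemma~\ref{EGA_existence_diff_operators}, since that lemma only records their values on the monomials $z^\beta$ and not their action on a general element $u\in S$. I would obtain it from the principal-parts/divided-power structure underlying \cite[16.11]{EGA-IV}, under which the $D_\alpha$ constitute a higher derivation indexed by $\NN^{\Lambda}$; absent a direct citation, I would instead prove the two instances actually needed by induction on $|\alpha|$, using the commutator identity $\partial(fg)=[\partial,\mu_f](g)+f\,\partial(g)$ (where $\mu_f$ is multiplication by $f$) together with the operator identity $[D_\alpha,\mu_f]=\sum_{0\neq\delta\leq\alpha}D_\delta(f)\,D_{\alpha-\delta}$, whose terms have order $<|\alpha|$. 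Everything else is routine multi-index bookkeeping and the reduction modulo $I$.
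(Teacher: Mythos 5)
Your argument is correct in substance, but it takes a genuinely different route from the paper's, and it needs one more input than the paper does. Both proofs (yours explicitly, the paper's implicitly) read the statement with $\beta$ supported on $\{1,\dots,t\}$, which is the only case used in the applications, so that restriction is fine. Where you diverge: you expand $D_\alpha(u z^\beta)$ via the divided-power Leibniz rule $D_\alpha(fg)=\sum_{\gamma+\delta=\alpha}D_\gamma(f)D_\delta(g)$ and then do the multi-index bookkeeping modulo $I$. The paper instead writes $D_\alpha(uz^\beta)=[D_\alpha,u](z^\beta)+uD_\alpha(z^\beta)$ and uses nothing about the commutator except its \emph{order}: since $[D_\alpha,u]\in D^{|\alpha|-1}_{S|B}$ and $z^\beta\in I^{|\beta|}$ with $|\beta|=|\alpha|$, Proposition~\ref{prop_ordinary_dif}~(\ref{prop_ordinary_dif_2}) gives $[D_\alpha,u](z^\beta)\in I$ immediately, and both claims then drop out of the monomial formula for $D_\alpha(z^\beta)$ (equal to $1$ when $\beta=\alpha$, and to $0$ when $\beta\neq\alpha$ and $|\beta|=|\alpha|$). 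So the paper's proof runs entirely on facts already established in the text, whereas yours requires the Hasse--Schmidt property of the family $\{D_\alpha\}$, which, as you correctly flag, is not part of the statement of Lemma~\ref{EGA_existence_diff_operators}; it is true, and can be imported from the principal-parts description behind EGA IV 16.11 (the $D_\alpha$ are dual to the basis $\xi^\alpha$ of the module of principal parts, and the Leibniz rule expresses that $f\mapsto\sum_\alpha D_\alpha(f)\xi^\alpha$ is a ring homomorphism).

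The one weak point in your write-up is the fallback plan: the operator identity $[D_\alpha,\mu_f]=\sum_{0\neq\delta\leq\alpha}D_\delta(f)D_{\alpha-\delta}$ is just a restatement of the Leibniz rule itself, so an ``induction'' based on it is close to circular unless you also bring in the composition rule or the principal-parts duality. If you want to avoid citing that extra structure altogether, the paper's observation is exactly the repair: you never need to know what $[D_\alpha,u]$ \emph{is}, only that it has order at most $|\alpha|-1$, and then Proposition~\ref{prop_ordinary_dif}~(\ref{prop_ordinary_dif_2}) does the rest. What your computation buys is a more explicit congruence ($D_\alpha(uz^\alpha)\equiv u$ modulo $I$), but the paper's commutator argument yields the same congruence with less machinery.
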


\begin{proof}
	First, note that $z^{\beta} \in I^{\left| \beta \right|}$. By Proposition~\ref{prop_ordinary_dif}~(\ref{prop_ordinary_dif_2}), $\partial \left( z^{\beta} \right) \in I$ for all $\partial \in D^{|\beta|-1 }_{S|B}$. Therefore, every differential operator $\left[ D_{\alpha}, u \right]$ with $|\alpha| = |\beta|$, takes $z^{\beta}$ into $I$. It follows that
	\begin{enumerate}
		\item $D_{\alpha} \left( u z^{\alpha} \right) = \left[ D_{\alpha}, u \right] \left( z^{\alpha} \right) + u D_{\alpha} \left( z^{\alpha} \right) =\left[ D_{\alpha}, u \right] \left( z^{\alpha} \right) + u \notin I$, and
		\item $D_{\alpha} \left( u z^{\beta} \right) = \left[ D_{\alpha}, u \right] \left( z^{\beta} \right) + u D_{\alpha} \left( z^{\beta} \right) =\left[ D_{\alpha}, u \right] \left( z^{\beta} \right) \in I$.\qedhere
	\end{enumerate}
\end{proof}
 
 We will need the following lemma on the behavior of differential powers under localization. The following is from forthcoming work by Brenner, N\'{u}\~{n}ez-Betancourt, and the third author \cite{DiffSig}. We include a proof here for completeness, while we refer the reader to \cite{DiffSig} for a thorough treatment and other applications of differential powers. We thank Holger Brenner and Luis N\'{u}\~{n}ez-Betancourt for allowing us to share this result here.
 
 \begin{lemma}[\cite{DiffSig}]\label{diff-powers-localize} Let $S$ be a ring, $B$ be a subring of $S$, $W$ be a multiplicatively closed subset of $S$, $V=W\cap B$, and $I$ be an ideal of $S$ such that $W^{-1} I \cap S=I$. Suppose that $S$ is essentially of finite type over $B$. Then
 	\begin{enumerate}
 		\item\label{diff-loc-1} $(W^{-1}I)\dif{n}{B}=(W^{-1}I)\dif{n}{V^{-1}B}$,
 		\item\label{diff-loc-2} $I\dif{n}{B}=(W^{-1}I)\dif{n}{B} \cap S$, and
 		\item\label{diff-loc-3} $W^{-1}I\dif{n}{B}=(W^{-1}I)\dif{n}{B}$.
 	\end{enumerate} 
 \end{lemma}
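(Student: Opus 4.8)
The plan is to reduce all three statements to a single technical input: that differential operators commute with the localization $W^{-1}(-)$. Since $S$ is essentially of finite type over $B$, the module of principal parts $P^n_{S|B}$ is a finitely presented $S$-module and $D^n_{S|B}=\Hom_S(P^n_{S|B},S)$; both the formation of principal parts and $\Hom$ out of a finitely presented module commute with $W^{-1}(-)$, so the natural map $W^{-1}D^n_{S|B}\to D^n_{W^{-1}S|B}$ is an isomorphism of $W^{-1}S$-modules (see \cite[Section~16.4]{EGA-IV}; in the essentially smooth case this also falls out of the explicit basis $\{D_\alpha\}$ of Lemma~\ref{EGA_existence_diff_operators}). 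Concretely, every $\partial\in D^{n-1}_{S|B}$ extends uniquely to an operator $\overline{\partial}\in D^{n-1}_{W^{-1}S|B}$ with $\overline{\partial}(s/1)=\partial(s)/1$ for $s\in S$, and every element of $D^{n-1}_{W^{-1}S|B}$ has the form $\frac{1}{w}\overline{\partial}$ for some such $\partial$ and some $w\in W$. I expect establishing this localization statement to be the main obstacle, since it is the only place the essentially-finite-type hypothesis is used; the rest is formal.

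For part~(\ref{diff-loc-1}), I would observe that both ideals are differential powers of the \emph{same} ideal $W^{-1}I$ in the \emph{same} ring $W^{-1}S$, so it suffices to prove the equality of operator sets $D^{n-1}_{W^{-1}S|B}=D^{n-1}_{W^{-1}S|V^{-1}B}$. The inclusion $\supseteq$ is clear since $B\subseteq V^{-1}B$. For $\subseteq$, note that a $B$-linear operator $\partial$ commutes with multiplication by each $v\in V\subseteq B$; applying this to $v\cdot(v^{-1}y)$ gives $\partial(y)=v\,\partial(v^{-1}y)$, so $\partial(v^{-1}y)=v^{-1}\partial(y)$, i.e. $\partial$ also commutes with multiplication by $v^{-1}$ and is therefore $V^{-1}B$-linear. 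The order filtrations agree because $D^0$ and the defining commutators are intrinsic to $W^{-1}S$. This yields~(\ref{diff-loc-1}) at once, with no appeal to the localization statement above.

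For part~(\ref{diff-loc-2}) I would argue both inclusions directly. If $f\in I\dif{n}{B}$ and $\widetilde{\partial}=\frac{1}{w}\overline{\partial}\in D^{n-1}_{W^{-1}S|B}$, then $\widetilde{\partial}(f)=\frac{1}{w}\,\overline{\partial}(f/1)=\frac{1}{w}\,\bigl(\partial(f)/1\bigr)\in W^{-1}I$ because $\partial(f)\in I$; as $f\in S$, this shows $f\in (W^{-1}I)\dif{n}{B}\cap S$. Conversely, if $f\in (W^{-1}I)\dif{n}{B}\cap S$, then for each $\partial\in D^{n-1}_{S|B}$ its extension $\overline{\partial}$ lies in $D^{n-1}_{W^{-1}S|B}$, so $\partial(f)/1=\overline{\partial}(f/1)\in W^{-1}I$, whence $\partial(f)\in W^{-1}I\cap S=I$ by hypothesis; thus $f\in I\dif{n}{B}$. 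This is precisely where the saturation hypothesis $W^{-1}I\cap S=I$ is needed.

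Finally, part~(\ref{diff-loc-3}) follows formally from~(\ref{diff-loc-2}). By Proposition~\ref{prop_ordinary_dif}~(\ref{prop_ordinary_dif_1}), $(W^{-1}I)\dif{n}{B}$ is an ideal of $W^{-1}S$, and every ideal of a localization equals the extension of its contraction, so $(W^{-1}I)\dif{n}{B}=W^{-1}\bigl((W^{-1}I)\dif{n}{B}\cap S\bigr)$. By~(\ref{diff-loc-2}) the contraction is $I\dif{n}{B}$, and therefore $(W^{-1}I)\dif{n}{B}=W^{-1}\bigl(I\dif{n}{B}\bigr)$, which is exactly~(\ref{diff-loc-3}).
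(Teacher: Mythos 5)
Your proposal is correct, and its backbone coincides with the paper's: the localization statement you flag as the main obstacle is exactly the paper's Lemma~\ref{lemma_Grothendieck}, proved there in the same way (via $D^n_{T|C}\cong \Hom_T(P^n_{T|C},T)$ and the EGA localization isomorphisms for principal parts), and your proof of part~(\ref{diff-loc-2}) is essentially identical to the paper's, including the use of the saturation hypothesis $W^{-1}I\cap S=I$ in the same spot. The genuine differences are in parts~(\ref{diff-loc-1}) and~(\ref{diff-loc-3}). For~(\ref{diff-loc-1}), the paper reads the equality off from the isomorphism $D^n_{W^{-1}S|B}\cong D^n_{W^{-1}S|V^{-1}B}$ of Lemma~\ref{lemma_Grothendieck}, while you give a direct elementary argument: every $B$-linear endomorphism of $W^{-1}S$ is automatically $V^{-1}B$-linear because each $v\in V$ acts invertibly, and the order filtration is intrinsic to $W^{-1}S$; this needs no essentially-finite-type hypothesis, so it is both simpler and more general than the paper's route. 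For~(\ref{diff-loc-3}), the paper proves both inclusions by hand, one of them by induction on $n$ using the identity $\partial\left(\frac{s}{w}\right)=\frac{1}{w}\left(\partial(s)-[\partial,w]\left(\frac{s}{w}\right)\right)$, whereas you deduce~(\ref{diff-loc-3}) formally from~(\ref{diff-loc-2}): the set $(W^{-1}I)\dif{n}{B}$ is an ideal of $W^{-1}S$ by Proposition~\ref{prop_ordinary_dif}~(\ref{prop_ordinary_dif_1}), every ideal of a localization is the extension of its contraction, and that contraction is $I\dif{n}{B}$ by~(\ref{diff-loc-2}). This is shorter and avoids the induction and commutator bookkeeping entirely, at the price of making~(\ref{diff-loc-3}) depend on~(\ref{diff-loc-2}), whereas the paper proves the two parts independently. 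A minor point in your favor: you correctly ask for $P^n_{S|B}$ to be finitely \emph{presented} so that $\Hom$ commutes with localization, where the paper's proof of Lemma~\ref{lemma_Grothendieck} only cites finite generation at that step.
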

 
 We first record the following lemma, which is well-known in the case $B$ is a field.
 
 \begin{lemma}\cite{DiffSig} \label{lemma_Grothendieck}
 	With notation as above, there are isomorphisms
 	\[ W^{-1}D^n_{S|B} \cong D^n_{W^{-1}S | B} \cong D^n_{W^{-1}S | V^{-1}B}.  \]
 	In particular, every $\delta\in D^n_{S|B}$ extends to an element $\widetilde{\delta}\in D^n_{W^{-1}S | V^{-1}B}$.
 \end{lemma}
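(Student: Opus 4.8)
The plan is to represent $B$-linear differential operators by homomorphisms out of the module of principal parts and then deduce both isomorphisms from the fact that $S \to W^{-1}S$ is a flat epimorphism. Recall that, writing $P^n_{S|B} = (S \otimes_B S)/\Delta^{n+1}$ for the module of principal parts (where $\Delta$ is the kernel of the multiplication map $S\otimes_B S \to S$, and $P^n_{S|B}$ is regarded as an $S$-module through the left-hand factor), there is a canonical identification $D^n_{S|B} \cong \Hom_S(P^n_{S|B}, S)$, under which $\partial$ corresponds to precomposition with the universal operator $s \mapsto \overline{1 \otimes s}$; see \cite[\S 16.8]{EGA-IV}. Because $S$ is essentially of finite type over $B$, the module $\Omega_{S|B}$ is finitely generated, and the $\Delta$-adic filtration on $P^n_{S|B}$ has associated graded pieces $\Sym^i_S \Omega_{S|B}$ for $0 \leq i \leq n$; hence $P^n_{S|B}$ is a finitely generated $S$-module, and since $S$ is Noetherian (being essentially of finite type over the Noetherian rings relevant here), it is in fact finitely presented.

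For the first isomorphism, I would combine two standard facts. First, since $S \to W^{-1}S$ is a flat epimorphism, the module of principal parts is compatible with this localization, so that $W^{-1}S \otimes_S P^n_{S|B} \cong P^n_{W^{-1}S|B}$ (see \cite[\S 16.4]{EGA-IV}). Second, for a finitely presented module $M$ localization commutes with $\Hom$, giving $W^{-1}\Hom_S(M, N) \cong \Hom_{W^{-1}S}(W^{-1}M, W^{-1}N)$. Applying this to $M = P^n_{S|B}$ and $N = S$, I obtain the chain
\[
W^{-1}D^n_{S|B} = W^{-1}\Hom_S(P^n_{S|B}, S) \cong \Hom_{W^{-1}S}(W^{-1}P^n_{S|B}, W^{-1}S) \cong \Hom_{W^{-1}S}(P^n_{W^{-1}S|B}, W^{-1}S) = D^n_{W^{-1}S|B},
\]
which is the desired isomorphism $W^{-1}D^n_{S|B} \cong D^n_{W^{-1}S|B}$.

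The second isomorphism is in fact an equality of subsets of $\Hom_{V^{-1}B}(W^{-1}S, W^{-1}S)$, and this is the easy part. The inclusion $D^n_{W^{-1}S|V^{-1}B} \subseteq D^n_{W^{-1}S|B}$ is immediate, since $V^{-1}B$-linearity is stronger than $B$-linearity while the order condition (expressed through commutators with multiplications by elements of $W^{-1}S$) is the same in both cases. For the reverse inclusion, I would show that any $B$-linear map $\partial$ on $W^{-1}S$ is automatically $V^{-1}B$-linear: each $v \in V = W \cap B$ is a unit in $W^{-1}S$, and $B$-linearity gives $\partial(x) = \partial(v \cdot v^{-1}x) = v\,\partial(v^{-1}x)$, whence $\partial(v^{-1}x) = v^{-1}\partial(x)$; combined with $B$-linearity this yields $V^{-1}B$-linearity. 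Thus $D^n_{W^{-1}S|B} = D^n_{W^{-1}S|V^{-1}B}$. Finally, the ``in particular'' statement follows by tracing $\delta \in D^n_{S|B}$ through the localization map $D^n_{S|B} \to W^{-1}D^n_{S|B}$ and the isomorphisms above: the image $\widetilde{\delta}$ lies in $D^n_{W^{-1}S|V^{-1}B}$ and satisfies $\widetilde{\delta}(s/1) = \delta(s)/1$ for $s \in S$, so it extends $\delta$.

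The main obstacle is the first isomorphism, and specifically its two inputs: the compatibility $W^{-1}S \otimes_S P^n_{S|B} \cong P^n_{W^{-1}S|B}$, which rests on $S \to W^{-1}S$ being an epimorphism (so that localizing the left factor automatically localizes the right one), and the finite presentation of $P^n_{S|B}$, which is what licenses the interchange of $\Hom$ with localization. Both hold under the essentially-of-finite-type hypothesis, and these are precisely the points where that hypothesis is used; by contrast, the identification $D^n_{W^{-1}S|B} = D^n_{W^{-1}S|V^{-1}B}$ is purely formal.
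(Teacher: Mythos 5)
Your proof is correct, and for the isomorphism $W^{-1}D^n_{S|B} \cong D^n_{W^{-1}S|B}$ it follows the same route as the paper: identify $D^n_{S|B}$ with $\Hom_S(P^n_{S|B},S)$ via \cite[16.8.1, 16.8.8]{EGA-IV}, record that $P^n_{S|B}$ is finitely generated (the paper cites \cite[16.4.22]{EGA-IV}, you derive it from the $\Delta$-adic filtration), invoke compatibility of principal parts with localization, and commute $\Hom$ with $W^{-1}$ using finite presentation. The genuine divergence is in comparing the base rings $B$ and $V^{-1}B$: the paper does this at the level of principal parts, citing \cite[16.4.15.1]{EGA-IV} for $W^{-1}P^n_{S|B}\cong P^n_{W^{-1}S|V^{-1}B}$ and \cite[16.4.14.1]{EGA-IV} for $P^n_{W^{-1}S|V^{-1}B}\cong P^n_{W^{-1}S|B}$, and must even caution that the proof of \cite[16.4.14]{EGA-IV} contains an error (though the statement is correct); you instead prove $D^n_{W^{-1}S|B}=D^n_{W^{-1}S|V^{-1}B}$ directly, observing that elements of $V$ become units in $W^{-1}S$, so every $B$-linear endomorphism of $W^{-1}S$ is automatically $V^{-1}B$-linear, and the two inductively defined filtrations then coincide. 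This makes that half of the lemma purely formal and sidesteps the problematic EGA reference entirely, which is a modest but real improvement; it also gives you an honest proof of the ``in particular'' clause, which the paper leaves implicit. Two small inaccuracies in your write-up, neither fatal: the graded pieces of the $\Delta$-adic filtration on $P^n_{S|B}$ are in general only \emph{quotients} of $\Sym^i_S\Omega_{S|B}$ (isomorphisms when $S$ is smooth over $B$), which still suffices for finite generation; and your upgrade from finite generation to finite presentation appeals to Noetherianity, which the lemma's stated hypotheses (an arbitrary subring $B$) do not guarantee --- but the paper's own proof relies on the same point implicitly when it commutes $\Hom$ with localization, and in every application $B$ is $\ZZ$, a DVR, or a field, so this is harmless.
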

 \begin{proof}
 	By \cite[16.8.1]{EGA-IV} and \cite[16.8.8]{EGA-IV}, there are isomorphisms $D^n_{T|C}\cong \Hom_T(P^n_{T|C},T)$ for all algebras $C\to T$, where $P^n_{T|C}$ denotes the module of principal parts. By \cite[16.4.22]{EGA-IV}, each $P^n_{S|B}$ is a finitely generated $S$-module. By \cite[16.4.15.1]{EGA-IV}, there are isomorphisms $W^{-1}P^n_{S|B}\cong P^n_{W^{-1}S|V^{-1}B}$, and by \cite[16.4.14.1]{EGA-IV} these modules are isomorphic to $P^n_{W^{-1}S|B}$. We caution the reader that the proof of \cite[16.4.14]{EGA-IV} contains an error, but the statements are correct. The stated isomorphisms now follow.
 \end{proof}
 
 \begin{proof}[Proof of Lemma~\ref{diff-powers-localize}]
 	
Part~(\ref{diff-loc-1}) is immediate from the previous lemma.

We prove part~(\ref{diff-loc-2}). In order to show that $I\dif{n}{B}\subseteq(W^{-1}I)\dif{n}{B} \cap S$, it suffices to prove that if $D^{n-1}_{S|B}(s)\subseteq I$ for some $s \in S$, then $D^{n-1}_{W^{-1}S | B}(\frac{s}{1}) \subseteq W^{-1}I$. For any $\partial\in D^{n-1}_{W^{-1}S | B}$, by Lemma \ref{lemma_Grothendieck} there exists $w\in W$ and $\eta\in D^{n-1}_{S|B}$ such that $\partial(\frac{s}{1})= \frac{\eta(s)}{w}$ for all $s\in S$. The claim is then clear. For the other containment, suppose that $s\in S$, and $D^{n-1}_{W^{-1}S | B}(\frac{s}{1}) \subseteq W^{-1}I$. If $\partial\in D^{n-1}_{S|B}$, then it extends to a differential operator $\widetilde{\partial}\in D^{n-1}_{W^{-1}S | B}$ such that $\widetilde{\partial}(\frac{s}{1})=\frac{\partial(s)}{1}$. By hypothesis, this element is in $W^{-1}I \cap S =I$. Thus, $s$ lies in $I\dif{n}{B}$.

We now prove part (\ref{diff-loc-3}). To show that $W^{-1}I\dif{n}{B}\subseteq(W^{-1}I)\dif{n}{B}$, we proceed  by induction on $n$. The case $n=1$ is trivial. Let $s\in I\dif{n}{B}$, $w\in W$, and $\partial\in D^{n-1}_{W^{-1}S | B}$. Then, $\partial(\frac{s}{w})=\frac{1}{w}(\partial(s)-[\partial,w](\frac{s}{w}))$. By the induction hypothesis, $[\partial,t](\frac{s}{t})\in W^{-1}I$. By Lemma \ref{lemma_Grothendieck}, there exists $t\in W$ and $\eta\in D^{n-1}_{S|B}$ such that $\partial(\frac{s}{1})= \frac{\eta(s)}{t}$. Since $\eta(s) \in I$ by hypothesis, we also have that $\partial(s)\in W^{-1}I$. We now prove the containment $W^{-1}I\dif{n}{B}\supseteq(W^{-1}I)\dif{n}{B}$. Since elements of $D^{n-1}_{S|B}$ extend to elements of $D^{n-1}_{W^{-1}S | B}$, we know that $D^{n-1}_{S|B}(s) \subseteq D^{n-1}_{W^{-1}S | B}(s)$ for all $s\in S$. But then 
 	\[D^{n-1}_{S|B}(s) \subseteq D^{n-1}_{W^{-1}S | B} (s) = D^{n-1}_{W^{-1}S | B} \left(\frac{st}{t}\right) = \left(D^{n-1}_{W^{-1}S | B} \cdot t\right) \left(\frac{s}{t}\right) \subseteq D^{n-1}_{W^{-1}S | B}  \left(\frac{s}{t}\right) ,\] since multiplication by an element does not increase the order of a differential operator. By hypothesis,  $D^{n-1}_{W^{-1}S | B}\left(\frac{s}{t}\right) \subseteq W^{-1} I$, so $D^{n-1}_{S|B} (s)\subseteq W^{-1}I \cap S =I$, as required.
\end{proof}

 As noted in the introduction, the Zariski-Nagata theorem can be stated in terms of differential powers of ideals  \cite{SurveySP}. Namely, if $S=K[x_1,\dots,x_d]$, $K$ is a perfect field, and $Q$ is a prime ideal of $S$, then $Q\dif{n}{K}=Q^{(n)}$ for all $n$. 
We can give a concise proof of the Zariski-Nagata theorem for algebras over fields by combining the previous two results.

\begin{theorem}[Zariski-Nagata]\label{ZN-field}
	Let $K$ be a field, $R$ be essentially smooth over $K$, and $Q$ be a prime ideal of $R$. If $K\hookrightarrow R_Q / Q R_Q$ is separable, then $Q\dif{n}{K}=Q^{(n)}$ for all $n$. In particular, if $K$ is perfect, then $Q\dif{n}{K}=Q^{(n)}$ for every prime $Q$ and all $n$.
\end{theorem}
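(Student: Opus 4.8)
The plan is to prove the two inclusions separately. The inclusion $Q^{(n)} \subseteq Q\dif{n}{K}$ is immediate and uses nothing about smoothness or separability: $Q$ is prime, hence $Q$-primary, so Proposition~\ref{prop_ordinary_dif}~(\ref{prop_ordinary_dif_4}) applied with $\a = Q$ gives it directly. All the content is in the reverse inclusion $Q\dif{n}{K} \subseteq Q^{(n)}$, and my first move is to reduce it to a statement about the regular local ring $R_Q$. Since $Q\dif{n}{K}$ is $Q$-primary by Proposition~\ref{prop_ordinary_dif}~(\ref{prop_ordinary_dif_3}), Lemma~\ref{diff-powers-localize}~(\ref{diff-loc-2}) (with $W = R\smallsetminus Q$) gives $Q\dif{n}{K} = (QR_Q)\dif{n}{K}\cap R$, while by definition $Q^{(n)} = Q^nR_Q \cap R$. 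As $R_Q$ is regular local with maximal ideal $\m := QR_Q$, its symbolic and ordinary powers coincide, so it suffices to prove the local statement $\m\dif{n}{K}\subseteq \m^n$; equivalently, in contrapositive form, that any $f \in R_Q$ with $f\notin \m^n$ is moved out of $\m$ by some $K$-linear operator of order $\leq n-1$.

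Next I set up the operators. Since $R_Q$ is essentially smooth and local over $K$, and $K \to \kappa := R_Q/\m$ is separable by hypothesis, Lemma~\ref{free-basis}~(\ref{lem-case-1}) furnishes a free basis $dz_1,\dots,dz_\Lambda$ of $\Omega_{R_Q|K}$ whose first $s$ entries come from a minimal generating set $z_1,\dots,z_s$ of $\m$ (so $s = \dim R_Q$ and $\m = (z_1,\dots,z_s)$). Feeding this basis into Lemma~\ref{EGA_existence_diff_operators} produces the divided-power operators $\{D_\alpha\}$, with $D_\alpha \in D^{|\alpha|}_{R_Q|K}$ and $D_\alpha(z^\beta) = \binom{\beta}{\alpha}z^{\beta-\alpha}$. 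These are exactly the operators that read off monomial coefficients.

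For the main step, take $f\notin \m^n$ and let $d$ be the order of $f$, i.e.\ $f\in \m^d\smallsetminus \m^{d+1}$, so $d \leq n-1$. Writing $\m^d = (z^\beta : |\beta|=d)$, express $f = \sum_{|\beta|=d} u_\beta z^\beta$ with $u_\beta \in R_Q$. Passing to $\gr_\m(R_Q)\cong \kappa[Z_1,\dots,Z_s]$, the leading form of $f$ is the nonzero degree-$d$ form $\sum_{|\beta|=d}\overline{u_\beta}\,Z^\beta$, so some coefficient $\overline{u_{\beta_0}}$ is nonzero in $\kappa$; equivalently $u_{\beta_0}$ is a unit of $R_Q$. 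I then apply $D_{\beta_0}$, which has order $d\leq n-1$. Using $D_{\beta_0}(z^{\beta_0}) = 1$ together with the commutator identity $D_{\beta_0}(u_{\beta_0}z^{\beta_0}) = [D_{\beta_0},u_{\beta_0}](z^{\beta_0}) + u_{\beta_0}$ and the fact (Proposition~\ref{prop_ordinary_dif}~(\ref{prop_ordinary_dif_2})) that the order-$(d-1)$ operator $[D_{\beta_0},u_{\beta_0}]$ sends $z^{\beta_0}\in\m^d$ into $\m$, the $\beta_0$-term is $\equiv u_{\beta_0}\not\equiv 0 \pmod{\m}$; for every other $\beta\neq\beta_0$ with $|\beta|=d$ one has $D_{\beta_0}(z^\beta)=0$, so $D_{\beta_0}(u_\beta z^\beta)=[D_{\beta_0},u_\beta](z^\beta)\in\m$ by the same order count. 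This is precisely the computation packaged in Corollary~\ref{fixing the mix up}. Hence $D_{\beta_0}(f)\notin\m$, witnessing $f\notin \m\dif{n}{K}$ and completing the reduction.

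Finally, the ``in particular'' assertion follows because over a perfect field every finitely generated field extension is separably generated, so $K\to\kappa$ is automatically separable for every prime $Q$. The genuine obstacle here is not the monomial bookkeeping---once the $D_\alpha$ are in hand that is routine---but arranging the hypotheses so that those operators exist in the first place: separability of $K\to\kappa$ is exactly what Lemma~\ref{free-basis} needs in order to produce a free basis of $\Omega_{R_Q|K}$ adapted to a minimal generating set of $\m$, and without it the leading-form detection can fail. I would therefore treat the correct invocation of Lemma~\ref{free-basis} and the localization compatibility of differential powers (Lemma~\ref{diff-powers-localize}) as the load-bearing inputs.
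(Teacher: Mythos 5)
Your proposal is correct and takes essentially the same route as the paper's proof: reduce to the local statement $(QR_Q)\dif{n}{K} \subseteq Q^nR_Q$ via Lemma~\ref{diff-powers-localize}, use Lemma~\ref{free-basis} and Lemma~\ref{EGA_existence_diff_operators} to get the operators $D_\alpha$ adapted to a minimal generating set of $QR_Q$, and apply Corollary~\ref{fixing the mix up} to a degree-$d$ monomial expansion of $f$ to exhibit an operator of order at most $n-1$ sending $f$ outside $QR_Q$. The only cosmetic differences are that you obtain the easy inclusion from Proposition~\ref{prop_ordinary_dif}~(\ref{prop_ordinary_dif_4}) and detect a unit coefficient via the associated graded ring, where the paper instead absorbs non-unit terms into a remainder $g \in Q^{s+1}R_Q$.
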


\begin{proof}
It suffices to check the equality $Q^{(n)}R_Q = Q\dif{n}{K}R_Q$. By Lemma \ref{diff-powers-localize}, $(QR_Q)\dif{n}{K}R_Q=Q\dif{n}{K}R_Q$, and since $Q$ is maximal in $R_Q$, we have that $Q^{(n)}R_Q =  Q^nR_Q$. Thus, it suffices to show $(QR_Q)\dif{n}{K}R_Q = Q^nR_Q$.
	
	Let $y_1,\dots,y_t$ be a minimal generating set for $Q R_Q$. Suppose that there exists an element $f \in (Q R_Q)\dif{n}{K}$ of order $s < n$, meaning that $f \in Q^s R_Q$, $f \notin Q^{s+1} R_Q$. Then, we can write $f= \sum_i u_i y^{\alpha_i} + g$ for some units $u_i \in R_Q$, some $\alpha_i$ with $|\alpha_i| = s$, and some $g \in Q^{s+1}R_Q$. Fix some multi-index $\alpha \in \NN^d$ with $| \alpha | = s$ and some unit $u$ such that $u y^\alpha$ appears in the expression of $f$ as above. By Lemma~\ref{free-basis}, $dy_1,\dots,dy_t$ form part of a free basis for $\Omega_{R|K}$. Thus, by Theorem~\ref{EGA_existence_diff_operators} and Corollary~\ref{fixing the mix up}, there exists a differential operator $D_\alpha \in D^{s}_{R_Q|K}$ such that $D_{\alpha} \left( u y^{\alpha} \right) \notin Q R_Q$ and $D_{\alpha} \left( u_i y^{\alpha_i} \right) \in Q R_Q$ for each other term $u_i y^{\alpha_i}$ in the expression for $f$. Additionally, since $g\in Q^{s+1}R_Q$ and $D_\alpha \in D^{s}_{R_Q|K}$, we have $D_\alpha(g)\in Q$ by Proposition \ref{prop_ordinary_dif} (\ref{prop_ordinary_dif_2}). It follows that $D_\alpha(f)\notin Q R_Q$, contradicting the assumption $f \in (Q R_Q)\dif{n}{K}$.
\end{proof}

We note that the essential smoothness hypothesis is necessary.

\begin{example}
	Let $K$ be a field, $R=K[x,y,z]/(y^2-xz)$, $Q=(x,y)$, and $\m=(x,y,z)$. Then, $x\in Q^{(2)} \smallsetminus \m^2$. However, it is evident from Definition~\ref{def_dif} that $Q\dif{2}{K}\subseteq \m\dif{2}{K}$. Thus, the conclusion of Theorem~\ref{ZN-field} cannot hold.
\end{example}

 The following example shows that the conclusion of the Zariski-Nagata theorem may fail if the field extension $K \to R_Q/QR_Q$ 
 is not separable.

\begin{example} Let $K=\FF_p(t)$, $R=K[x]$, and $Q=(x^p-t)$. We claim that $Q\dif{2}{K}=Q$. By Lemma~\ref{EGA_existence_diff_operators}, $D^1_{R|K}=R \oplus R \frac{d}{dx}$ (where $\frac{d}{dx}=D_1$ in the notation of Lemma~\ref{EGA_existence_diff_operators}),  so it suffices to show that $\frac{d}{dx}(x^p-t)\in Q$. Indeed, $\frac{d}{dx}(x^p-t)=0$, so the claim is verified. Since $Q$ is a principal ideal, $Q^{(2)}=Q^2$. In particular, $Q^{(2)}\neq Q\dif{2}{K}$.
\end{example}

We are now ready to state our first main result: a version of the Zariski-Nagata theorem for prime ideals that do not contain any prime integer.
\begin{theorem} \label{thm_dif=symb} Let $A$ be either $\ZZ$ or a DVR of mixed characteristic. Let $R$ be an essentially smooth $A$-algebra. If $Q \in \Spec(R)$ is such that $Q \cap A = (0)$, then $Q^{(n)}=Q\dif{n}{A}$.
\end{theorem}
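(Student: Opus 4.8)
The strategy is to invert the nonzero elements of $A$, thereby reducing the entire question to the Zariski--Nagata theorem over a field of characteristic zero, where no separability issue arises and $p$-derivations play no role. Set $W = A \smallsetminus \{0\}$ and $K = W^{-1}A = \operatorname{Frac}(A)$. Since $A$ is either $\ZZ$ or a DVR of mixed characteristic, $K$ has characteristic zero, hence is perfect. The hypothesis $Q \cap A = (0)$ says exactly that $W \cap Q = \varnothing$, so each element of $W$ maps to a nonzero element of the domain $R/Q$ and is a nonzerodivisor modulo $Q$; it follows that $W^{-1}Q \cap R = Q$, which is the saturation hypothesis under which Lemma~\ref{diff-powers-localize} is stated. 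Moreover $W^{-1}R = R \otimes_A K$ is essentially smooth over $K$: it is a localization of a finitely generated $K$-algebra, and formal smoothness is preserved under the base change along $A \to K$.

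I would then run the reduction in three moves. Taking $I = Q$ and $B = A$ in Lemma~\ref{diff-powers-localize}(\ref{diff-loc-2}) gives $Q\dif{n}{A} = (W^{-1}Q)\dif{n}{A} \cap R$, and part~(\ref{diff-loc-1}), with $V = W \cap A = W$ so that $V^{-1}A = K$, gives $(W^{-1}Q)\dif{n}{A} = (W^{-1}Q)\dif{n}{K}$; hence $Q\dif{n}{A} = (W^{-1}Q)\dif{n}{K} \cap R$, and the problem now lives entirely in the essentially smooth $K$-algebra $W^{-1}R$. Because $K$ is perfect, Theorem~\ref{ZN-field} applies to the prime $W^{-1}Q$ and yields $(W^{-1}Q)\dif{n}{K} = (W^{-1}Q)^{(n)}$. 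Finally I would compare symbolic powers: since $W \subseteq R \smallsetminus Q$, localizing $W^{-1}R$ at $W^{-1}Q$ returns $R_Q$, so $(W^{-1}Q)^{(n)} = Q^n R_Q \cap W^{-1}R$, and intersecting with $R$ gives $(W^{-1}Q)^{(n)} \cap R = Q^n R_Q \cap R = Q^{(n)}$. Chaining the three identities yields $Q\dif{n}{A} = Q^{(n)}$.

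Each step is a formal consequence of results already in hand, so the difficulty is organizational rather than substantive: the same multiplicative set $W$ must simultaneously control the localization of differential operators (via Lemma~\ref{lemma_Grothendieck} feeding Lemma~\ref{diff-powers-localize}), of differential powers, and of symbolic powers. The conceptual point --- and the reason $Q \cap A = (0)$ is precisely the right hypothesis --- is that inverting $A \smallsetminus \{0\}$ both annihilates the arithmetic, placing us in characteristic zero where differential operators alone suffice, and leaves $Q$ untouched because $W \cap Q = \varnothing$. I expect the only places needing genuine care are the verification that $W^{-1}Q \cap R = Q$, which licenses Lemma~\ref{diff-powers-localize}, and the clean identification $(W^{-1}R)_{W^{-1}Q} \cong R_Q$, which makes the symbolic-power comparison transparent.
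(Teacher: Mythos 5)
Your proposal is correct and takes essentially the same route as the paper's proof: your $W^{-1}R = R \otimes_A K$ is exactly the paper's $R' = R \otimes_\ZZ \QQ$ (with $K = A'$), and both arguments chain Lemma~\ref{diff-powers-localize}, parts~(1) and~(2), with Theorem~\ref{ZN-field} over the characteristic-zero fraction field and the identification $(W^{-1}R)_{W^{-1}Q} \cong R_Q$ to compare symbolic powers. The verifications you single out ($W^{-1}Q \cap R = Q$ and essential smoothness of $W^{-1}R$ over $K$) are the same points the paper passes through, merely made more explicit.
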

\begin{proof} Let $A'=A\otimes_{\ZZ}\QQ$ and $R'=R\otimes_A A'=R\otimes_{\ZZ}\QQ$. We note that $A'$ is a field of characteristic zero, and $R'$ is formally smooth and essentially of finite type over $A'$. Observe that $QR'$ is a prime ideal in $R'$. We claim that $(QR')^{(n)}\cap R = Q^{(n)}$. Indeed, $A\cap Q=(0)$, so $R_Q \cong R'_{QR'}$, and thus 
	\[ Q^{(n)} = Q^n R_Q \cap R = ((QR')^n R'_{QR'} \cap R') \cap R = (QR')^{(n)}\cap R.\]
	
	We note next that $(QR')^{(n)}=(QR')\dif{n}{A'}$; indeed, Theorem~\ref{ZN-field} applies. Finally, applying parts (1) and (2) of Lemma~\ref{diff-powers-localize}, we conclude that
	\[ Q^{(n)} = (QR')^{(n)}\cap R = (QR')\dif{n}{A'}\cap R = Q\dif{n}{A}. \qedhere \]
\end{proof}
As an application of Theorem~\ref{thm_dif=symb}, we obtain a generalization of Zariski's main lemma on holomorphic functions \cite{Zariski,EisenbudHochster}.
\begin{corollary} Let $A$ be as in Theorem~\ref{thm_dif=symb}, and assume that $R$ is smooth over $A$. Let $A' = A \otimes_\ZZ \QQ$, and $R' = R\otimes_A A'$. For a prime $Q\subseteq R$ not containing any prime integer, set $\mathcal{B} = \{\n \cap R \mid \n \in \Max\Spec(R') \cap \V(QR')\}$. We have $Q^{(n)} = \bigcap_{\q \in \mathcal{B}} \q^{(n)}$. 
\end{corollary}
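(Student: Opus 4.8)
The plan is to reduce the corollary to the previous theorem and then interpret the resulting differential power over the characteristic-zero field $A'$ using the classical Zariski-Nagata description. First I would observe that since $R$ is smooth over $A$ and $Q$ does not contain any prime integer, Theorem~\ref{thm_dif=symb} gives $Q^{(n)} = Q\dif{n}{A}$, and the proof of that theorem passes through $R' = R\otimes_A A'$, where $A' = A\otimes_\ZZ \QQ$ is a field of characteristic zero. Concretely, the key identities already established are $Q^{(n)} = (QR')^{(n)}\cap R$ and $(QR')^{(n)} = (QR')\dif{n}{A'}$. So it suffices to show that over $R'$, which is smooth and essentially of finite type over the characteristic-zero (hence perfect) field $A'$, we have the maximal-ideal description
\[
(QR')^{(n)} = \bigcap_{\n} \n^{(n)},
\]
where $\n$ ranges over $\Max\Spec(R')\cap \V(QR')$, and then intersect with $R$.

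The main step is therefore a maximal-ideals reformulation of the field case. I would argue that for the prime $QR'$ in the essentially smooth $A'$-algebra $R'$, the symbolic power $(QR')^{(n)}$ equals the intersection of $\n^{(n)}$ over all maximal ideals $\n \supseteq QR'$. Since each such $\n$ is maximal, $\n^{(n)} = \n^n$, and for $R'$ smooth over the perfect field $A'$ the Zariski-Nagata theorem (Theorem~\ref{ZN-field}) applies both to $QR'$ and to each $\n$, giving $(QR')^{(n)} = (QR')\dif{n}{A'}$ and $\n^{(n)} = \n\dif{n}{A'}$. The containment $(QR')^{(n)} \subseteq \bigcap_\n \n^{(n)}$ is formal from the definition of differential powers, since $QR' \subseteq \n$ forces $(QR')\dif{n}{A'} \subseteq \n\dif{n}{A'}$ (an operator landing in $\n$ whenever it lands in $QR'$). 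For the reverse containment I would use the differential characterization directly: if $f$ lies in $\n\dif{n}{A'}$ for every $\n \in \V(QR')$ maximal, then for each $\partial \in D^{n-1}_{R'|A'}$ the element $\partial(f)$ lies in every maximal ideal containing $QR'$, hence in the Jacobson-radical-type intersection $\bigcap_{\n \supseteq QR'}\n$; since $R'$ is a Jacobson ring (being finitely generated, or essentially of finite type with the relevant localization, over a field) this intersection is exactly $\sqrt{QR'} = QR'$, so $\partial(f)\in QR'$ and $f \in (QR')\dif{n}{A'} = (QR')^{(n)}$.

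Finally I would descend back to $R$ by intersecting with $R$. Writing $\mathcal{B} = \{\n\cap R \mid \n\in \Max\Spec(R')\cap \V(QR')\}$ and using that contraction commutes with intersections, we get
\[
Q^{(n)} = (QR')^{(n)}\cap R = \Bigl(\bigcap_{\n} \n^{(n)}\Bigr)\cap R = \bigcap_{\n}\bigl(\n^{(n)}\cap R\bigr),
\]
and the remaining task is to identify $\n^{(n)}\cap R$ with $(\n\cap R)^{(n)} = \q^{(n)}$ for $\q = \n\cap R \in \mathcal{B}$. This is the analogue of the flat-base-change identity used in the proof of Theorem~\ref{thm_dif=symb}: since $Q\cap A = (0)$ we have $R_Q \cong R'_{QR'}$, and more generally contracting a symbolic power along the faithfully flat (localization-compatible) map $R\to R'$ respects the primary decomposition, so $\n^{(n)}\cap R = \q^{(n)}$.

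The hard part will be justifying that the intersection of the maximal ideals of $R'$ lying over $QR'$ collapses to $QR'$ itself, and correspondingly that $\n^{(n)}\cap R = \q^{(n)}$ rather than something larger; both rest on $R'$ being Jacobson and on the base-change isomorphism $R_Q \cong R'_{QR'}$, so the technical care goes into confirming that essentially smooth (not necessarily finite-type) algebras over $A'$ retain enough of the Jacobson property on the relevant locus and that contraction of symbolic powers is exact here.
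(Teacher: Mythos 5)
Your proof is correct, but it takes a genuinely different route from the paper's. The paper stays inside $R$: by Theorem~\ref{thm_dif=symb} (applied to $Q$ and to every $\q\in\mathcal{B}$) it reduces the corollary to the equality $Q\dif{n}{A} = \bigcap_{\q\in\mathcal{B}}\q\dif{n}{A}$, and the heart of its argument is the ideal-theoretic identity $Q = \bigcap_{\q\in\mathcal{B}}\q$, proved prime-by-prime: for each prime integer $p$, the Hilbert--Jacobson property of $R/pR$ gives $J:=\bigcap_{\q\in\mathcal{B}}\q \subseteq Q+(p)$, a nonzerodivisor trick upgrades this to $J = Q+pJ$, and Nakayama at every maximal ideal of $R$ forces $J=Q$; the differential characterization then finishes. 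You instead work upstairs in the characteristic-zero fiber $R'=R\otimes_A A'$, where $R'$ is of finite type over the field $A'$, hence Jacobson, so $\bigcap_{\n}\n = QR'$ is immediate; combining this with Theorem~\ref{ZN-field} applied to $QR'$ and to each maximal $\n$ (separability being automatic in characteristic zero) gives $(QR')^{(n)} = \bigcap_{\n}\n^{(n)}$, and you then contract along $R\to R'$ using $\n^{(n)}\cap R = (\n\cap R)^{(n)}$, which holds because $R'$ is a localization of $R$, symbolic powers commute with localization, and $\q^{(n)}$ is $\q$-primary with $\q\cap A=(0)$. Your route buys a shorter argument that entirely avoids the paper's most delicate step (the identity $\bigcap_{\q\in\mathcal{B}}\q = Q$ inside $R$, which concerns primes that are not maximal in $R$); the paper's route buys that stronger statement itself, a mixed-characteristic analogue of Zariski's main lemma at the level of primes, and keeps everything phrased in terms of $A$-linear differential operators on $R$. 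Two small repairs to your write-up: the map $R\to R'$ is a localization, hence flat but \emph{not} faithfully flat, so drop that word (what you actually use is the localization compatibility just described, exactly as in the proof of Theorem~\ref{thm_dif=symb}); and your closing worry about Jacobson-ness for merely essentially smooth algebras is moot, since the corollary assumes $R$ smooth, i.e.\ of finite type over $A$, so $R'$ is of finite type over $A'$ and the Jacobson property is the standard general Nullstellensatz.
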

\begin{proof}
By Theorem \ref{thm_dif=symb}, it suffices to show that $Q\dif{n}{A} = \bigcap_{\q \in \mathcal{B}} \q\dif{n}{A}$. Since $Q \subseteq \q$ for all $\q \in \mathcal{B}$, it follows that $Q\dif{n}{A} \subseteq \bigcap_{\q \in \mathcal{B}} \q\dif{n}{A}$. For the converse, we claim that $Q = \bigcap_{\q \in \mathcal{B}} \q$.  Let $J=\bigcap_{\q \in \mathcal{B}} \q$, and let $p$ be a prime integer. Note that $J \subseteq \bigcap_{\q \in \mathcal{B}} (\q + (p)) = Q + (p)$, where the equality follows from the fact that $R/pR$ is a Hilbert-Jacobson ring, and that $\{\q+(p) \mid \q \in \mathcal{B}\} = \Max\Spec(R) \cap \V(Q+(p))$. Observe that $p$ is a nonzerodivisor on $R/J$, because $p \notin \q$ for all $\q \in \mathcal{B}$. Let $a \in J$. It follows from the containments proved above that we can write $a=x+py$, where $x \in Q$ and $y \in R$. Since $Q \subseteq J$, we conclude that $py = a-x \in J$, which implies that $y \in J$. Therefore, we have that $J \subseteq Q + pJ \subseteq  Q+\m J \subseteq J$ for every maximal ideal $\m$ of $R$ that contains $p$. Since the prime integer $p$ was arbitrary, this argument shows that $J = Q + \m J$ for every maximal ideal $\m$ of $R$. Nakayama's lemma applied to the localization at each such ideal allows us to conclude that $J=Q$, as desired. Now let $f \in \bigcap_{\q \in \mathcal{B}} \q\dif{n}{A}$, and $\delta \in D^{n-1}_{R|A}$ be arbitrary. By assumption, we have that $\delta(f) \in \bigcap_{\q \in \mathcal{B}} \q$. Finally, our previous claim shows that $\delta(f) \in Q$, and we conclude that $f \in Q\dif{n}{A}$. 
\end{proof}

\subsection{Primes containing $p$} Throughout this subsection, $p$ is a prime integer, and we assume that all rings involved are $p$-torsion free; we note that this condition is implied by the hypotheses of Setting~\ref{setting-containsP} below. We now move our attention to prime ideals that contain a given prime integer $p$. 
The following remark shows that, in order to characterize symbolic  powers of such ideals, it is not sufficient to rely just on differential operators.

\begin{remark}\label{example_partial(p)=0}
	Let $S$ be a ring of characteristic zero, $B\subseteq S$ a subring, and $Q$ be a prime ideal of $S$ such that $Q\cap \ZZ=(p)\neq (0)$. Then, since every $B$-linear differential operator $\partial$ on $S$ satisfies $\partial(p) = p \partial(1) \in (p) \subseteq Q$, we have $p \in Q\dif{n}{B}$ for all $n$. Because $\bigcap_{n\in\NN} Q^{(n)} = 0$, $Q\dif{n}{B} \neq Q^{(n)}$ for all but finitely many $n$.

For a simple concrete example, let $S=\ZZ$ and $Q=(2)$. Then $Q^n=(2^n)$ for all $n$, whereas $Q\dif{n}{\ZZ}=(2)$ for all $n$.
\end{remark}

Remark~\ref{example_partial(p)=0} shows that differential powers are too large for our purpose. The issue with this approach is that differential operators cannot decrease the $p$-adic order of an element. On the other hand, $p$-derivations have this feature, and this motivates the following definition.

\begin{definition} \label{defn_derp} Let $p\in \ZZ$ be a prime, $S$ be a ring with a $p$-derivation $\delta$, and $I$ be an ideal of $S$. The \emph{$n$-th $p$-differential power of $I$ with respect to $\delta$} is
\[ I\derp{n}:=\{ f\in S \ | \ \delta^a(f) \in I \ \text{for all} \ a\leq n-1 \}.\]
We note that this definition depends on the choice of $\delta$, although this is suppressed to avoid conflicting notation with other notions. 
\end{definition}
It immediately follows from Definition \ref{defn_derp} that $I\derp{n+1} \subseteq I\derp{n}$ for all $n$. In what follows, we will be mainly concerned with $p$-differential powers $I\derp{n}$ with respect to a fixed $p$-derivation $\delta$. However, it is convenient to define a similar object, where we take into account all $p$-derivations on $S$ at the same time:
\[
\ds I^{\lr{n}_{\Der_p}} := \{ f\in S \ | \ \Der_p^{a}(f) \subseteq I \ \text{for all} \ a\leq n-1 \}.
\]
\begin{proposition}\label{prop_pdiff_power} Let $p\in \ZZ$ be a prime, $S$ be a ring with a $p$-derivation $\delta$, and $I$ be an ideal of $S$. If $I$ is an ideal, then $I\derp{n}$ and $I^{\lr{n}_{\Der_p}}$ are ideals for all $n$. Moreover, if  $Q \in \Spec(S)$ is a prime ideal, and $\a$ is a $Q$-primary ideal that contains $p$, we have the following properties:
\begin{enumerate}
\item \label{prop_pdiff_power_2b} For any $s,t \in \NN$ we have $\a\derp{s}\a\derp{t}\subseteq \a\derp{s+t}$ and $\a^{\lr{s}_{\Der_p}}\a^{\lr{t}_{\Der_p}}\subseteq \a^{\lr{s+t}_{\Der_p}}$.
\item \label{prop_pdiff_power_1} $\delta(\a^n) \subseteq \a^{n-1}$ for all $n$. In particular, $\a^n \subseteq \a\derp{n}$ and $\a^n \subseteq \a^{\lr{n}_{\Der_p}}$.
\item \label{prop_pdiff_power_3} $\a\derp{n}$ and $\a^{\lr{n}_{\Der_p}}$ are $Q$-primary ideals.
\item \label{prop_pdiff_power_4} $\a^{(n)} \subseteq \a\derp{n}$ and $\a^{(n)} \subseteq \a^{\lr{n}_{\Der_p}}$.
\end{enumerate}

\end{proposition}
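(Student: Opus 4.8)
The plan is to prove the statements in the order: first that $I\derp{n}$ and $I^{\lr{n}_{\Der_p}}$ are ideals, then (2), then (1), then (3), and finally deduce (4). The engine for every step is the recursive description
\[ I\derp{n} = \{ f \in I \mid \delta(f) \in I\derp{n-1}\}, \qquad I^{\lr{n}_{\Der_p}} = \{ f \in I \mid \delta'(f) \in I^{\lr{n-1}_{\Der_p}} \text{ for all } \delta' \in \Der_p(S)\}, \]
which is immediate from the definitions and lets me induct on $n$. The recurring difficulty is that a $p$-derivation is not additive, so the correction term $\mathcal{C}_p$ of \eqref{sum} appears at each step; the key is Remark~\ref{Remark_Cp}, namely $\mathcal{C}_p(a,b) \in (a) \cap (b)$, which I will use to absorb these corrections into the appropriate ideal.

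For the ideal claim I induct on $n$. Given $f, g \in I\derp{n}$, to see $f+g \in I\derp{n}$ it suffices to check $\delta(f+g) \in I\derp{n-1}$; by \eqref{sum} this is $\delta(f) + \delta(g) + \mathcal{C}_p(f,g)$, where $\delta(f), \delta(g) \in I\derp{n-1}$ by the recursive description and $\mathcal{C}_p(f,g) \in (f) \subseteq I\derp{n-1}$ because $f \in I\derp{n-1}$ (an ideal by induction). Closure under multiplication is analogous, expanding $\delta(rf)$ via \eqref{prod} and using $f, f^p \in I\derp{n-1}$. Since $\mathcal{C}_p$ does not depend on the chosen $p$-derivation, the identical argument run for all $\delta' \in \Der_p(S)$ handles $I^{\lr{n}_{\Der_p}}$.

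For (2), $\delta(\a^n) \subseteq \a^{n-1}$ follows by first treating a single product $a_1 \cdots a_n$ through \eqref{prod} (an induction using $a_i^p \in \a$, the inductive bound $\delta(a_1 \cdots a_{n-1}) \in \a^{n-2}$, and crucially $p \in \a$), then writing a general element of $\a^n$ as a finite sum of such products and handling the sum by \eqref{sum}, each $\mathcal{C}_p$ term lying in $\a^n$. Iterating gives $\delta^a(\a^n) \subseteq \a^{n-a} \subseteq \a$ for $a \le n-1$, so $\a^n \subseteq \a\derp{n}$, and applying this to arbitrary compositions yields $\a^n \subseteq \a^{\lr{n}_{\Der_p}}$. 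For (1) I induct on $s+t$ (the cases $s=0$ or $t=0$ being trivial): for $f \in \a\derp{s}$, $g \in \a\derp{t}$ I have $fg \in \a$, and expanding $\delta(fg)$ by \eqref{prod} the three summands $g^p\delta(f)$, $f^p\delta(g)$ and $p\,\delta(f)\delta(g)$ each land in $\a\derp{s+t-1}$ by the inductive hypothesis, using that $\a\derp{s}, \a\derp{t}$ are ideals and $p \in \a\derp{1}$; the $\Der_p$-version is identical.

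The main obstacle is (3), the primary property, precisely because $\delta$ does not preserve $\a$, so one cannot simply iterate $\delta$ on an error term. After noting $\a^n \subseteq \a\derp{n} \subseteq \a$ forces $\sqrt{\a\derp{n}} = Q$, I induct on $n$. If $xy \in \a\derp{n}$ with $x \notin Q$, then $xy \in \a\derp{n-1}$ and the inductive hypothesis (that $\a\derp{n-1}$ is $Q$-primary) gives $y \in \a\derp{n-1}$, so it remains to prove $\delta(y) \in \a\derp{n-1}$. Working modulo the ideal $\a\derp{n-1}$ and expanding $\delta(xy)$ via \eqref{prod}, the term $y^p\delta(x)$ vanishes since $y \in \a\derp{n-1}$, and $p\,\delta(x)\delta(y)$ vanishes by (1) together with $\delta(y) \in \a\derp{n-2}$ and $p \in \a\derp{1}$, leaving $\delta(xy) \equiv x^p\delta(y) \pmod{\a\derp{n-1}}$. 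Since $\delta(xy) \in \a\derp{n-1}$ and $x^p \notin Q$, primality of $\a\derp{n-1}$ forces $\delta(y) \in \a\derp{n-1}$, hence $y \in \a\derp{n}$; running this for every $\delta'$ gives the same for $\a^{\lr{n}_{\Der_p}}$. Finally (4) is immediate from (2) and (3): both $\a\derp{n}$ and $\a^{\lr{n}_{\Der_p}}$ are $Q$-primary ideals containing $\a^n$, and $\a^{(n)}$ is the smallest such ideal, exactly as in the proof of Proposition~\ref{prop_ordinary_dif}~(\ref{prop_ordinary_dif_4}).
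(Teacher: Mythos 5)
Your proof is correct and follows essentially the same route as the paper's: the same recursive description and induction on $n$, with Remark~\ref{Remark_Cp} absorbing the $\mathcal{C}_p$ correction terms, the same products-then-sums argument for $\delta(\a^n)\subseteq \a^{n-1}$, the same induction on $s+t$ for part (1), the same primary-ideal induction for part (3), and part (4) deduced from minimality of $\a^{(n)}$ exactly as in the paper. The only cosmetic difference is in part (3), where the paper groups $(x^p+p\delta(x))\delta(y)=\delta(xy)-y^p\delta(x)$ and observes that $x^p+p\delta(x)\notin Q$, whereas you dispose of the term $p\,\delta(x)\delta(y)$ via part (1) and $\delta(y)\in\a\derp{n-2}$; both are valid.
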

\begin{proof}
We only prove the statements for $p$-differential powers with respect to the given $p$-derivation $\delta$, since the proofs for $I^{\lr{n}_{\Der_p}}$ and $\a^{\lr{n}_{\Der_p}}$ are completely analogous. 

We first show by induction on $n$ that $I\derp{n}$ is an ideal. For $n=1$ we have $I\derp{1}=I$, and the statement is trivial. Assume that $I\derp{n-1}$ is an ideal, and let $x,y\in I\derp{n}$. By induction, we have that $\delta(x)+\delta(y) \in I\derp{n-1}$, since $\delta(x),\delta(y) \in I\derp{n-1}$. It follows that $\delta(x+y) = \delta(x)+\delta(y) + \mathcal{C}_p(x,y) \in I\derp{n-1}$, because $\mathcal{C}_p(x,y) \in (x) \subseteq I\derp{n} \subseteq I\derp{n-1}$ by Remark \ref{Remark_Cp}. This shows that $x+y \in I\derp{n}$. Now let $x \in I\derp{n}$ and $b \in S$. By induction, we may assume that $yc \in I\derp{n-1}$ for all $y \in I\derp{n-1}$ and all $c \in S$. Since $\delta(x) \in I\derp{n-1}$, it follows that $\delta(xb) = x^p\delta(b) + b^p\delta(x) + p\delta(b)\delta(x) \in I\derp{n-1}$, because $x \in I\derp{n}\subseteq I\derp{n-1}$. This finishes the proof that $I\derp{n}$ is an ideal. 

Now we let $\a$ be a $Q$-primary ideal of $S$ that contains $p$.

(\ref{prop_pdiff_power_2b}) We proceed by induction on $s+t \geq  0$. If either $s=0$, or $t=0$, then the claim is trivial. This takes care of the base case for the induction, and allows us to assume henceforth that $s \geq 1$ and $t \geq 1$. Let $x \in \a\derp{s}$ and $y \in \a\derp{t}$. We observe that
\[
\ds \delta(xy) = x^p\delta(y) + y^p \delta(x) + p \delta(x)\delta(y) \in \a\derp{s}\a\derp{t-1} + \a\derp{t} \a\derp{s-1} + \a\cdot \a\derp{s-1}\a\derp{t-1},
\]
and by induction we obtain that $\delta(xy) \in \a\derp{s+t-1}$. This shows that $xy \in \a\derp{s+t}$, as claimed.

(\ref{prop_pdiff_power_1}) We proceed by induction on $n\geq 1$. For $n=1$ the statements are clear. Assume that $\delta(\a^n) \subseteq \a^{n-1}$ holds true; we want to show that $\a^{n+1}$ is mapped inside $\a^n$ by $\delta$. We first show that any element of the form $xy$, with $x\in \a$ and $y \in \a^n$ satisfies $\delta(xy) \in \a^n$. Using the inductive hypothesis, we get
\[
\ds \delta(xy) = x^p\delta(y) + y^p \delta(x) + p\delta(x)\delta(y) \in \a^p \a^{n-1} + \a^{pn} + p \cdot \a^{n-1}.
\]
Given that $p \in \a$, we conclude that $\delta(xy) \in \a^n$, as desired. Because every element of $\a^{n+1}$ can be written as a sum of elements $xy$, with $x \in \a$ and $y \in \a^n$, it suffices to show that the sum of any two such elements is mapped by $\delta$ inside $\a^n$. Let $z,w \in \a^{n+1}$ be elements of this form, and recall that $\delta(z+w) = \delta(z) + \delta(w) + \mathcal{C}_p(z,w)$. Since $\mathcal{C}_p(z,w) \in (z) \subseteq \a^{n+1}$, and $\delta(z),\delta(w) \in \a^n$ by what we have shown above, we conclude that $\delta(z+w) \in \a^n$. The final claim now follows immediately, since $\delta(\a^{n+1}) \subseteq \a^n \subseteq \a\derp{n}$ by induction, so that $\a^{n+1} \subseteq \a\derp{n+1}$.

(\ref{prop_pdiff_power_3}) Since $\a^n \subseteq \a\derp{n} \subseteq \a$, it follows immediately that $\sqrt{\a\derp{n}} = Q$ for all $n$. To show that $ \a\derp{n}$ is indeed primary, we proceed by induction on $n$, the base case $\a\derp{1}=\a$ being trivial. Let $n \geqslant 2$, and assume that $xy \in \a\derp{n}$, but $x \notin Q$. Since $\a\derp{n} \subseteq \a\derp{n-1}$, and $\a\derp{n-1}$ is $Q$-primary by induction, we necessarily have that $y \in \a\derp{n-1}$. By assumption, we have that $\delta(xy) \in\a\derp{n-1}$, therefore
\[
\ds (x^p+p\delta(x))\delta(y) = \delta(xy) - y^p\delta(x) \in \a\derp{n-1}.
\]
Because $p \in Q$, we have that $x^p+p\delta(x) \notin Q$, otherwise $x \in Q$. Since $\a\derp{n-1}$ is $Q$-primary, we conclude that $\delta(y) \in \a\derp{n-1}$, that is, $y \in \a\derp{n}$. 

(\ref{prop_pdiff_power_4}) follows from (\ref{prop_pdiff_power_1}) and (\ref{prop_pdiff_power_3}), since $\a^{(n)}$ is the smallest $Q$-primary ideal that contains $\a^n$.
\end{proof}

\begin{remark} Let $\a$ be a $Q$-primary ideal, with $p \in Q$. Note that $\a^n \subseteq \a\derp{n}$ is not true, in general, if we do not assume that $\a$ itself contains $p$.
\end{remark}
\begin{example}
Consider the ideal $\a=(4,x+2,y+2)$ inside the polynomial ring $R=\ZZ[x,y]$, and let $p=2$. Then $\a$ is $Q$-primary, with $Q=(2,x,y)$. However, we claim that there is a $p$-derivation $\delta$ for which  $\a^2 \not\subseteq \a\derp{2}$. Set $f=x+2$ and $g=y+2$, and let $\phi$ be the ring homomorphism 
\[
\xymatrixrowsep{1mm}
\xymatrixcolsep{5mm}
\xymatrix{
\phi: & \ZZ[x,y] \ar[rr] && \ZZ[x,y] \\
& h(x,y) \ar@{|->}[rr]&& h(x^2,y^2).
}
\]
Consider the associated $p$-derivation $\delta \in \Der_p(R)$, defined as $\delta(h) = \frac{\phi(h)-h^p}{p}$. Note that $\delta(2)=-1$, while $\delta(x)=\delta(y)=0$. Then we have
\[
\ds \delta(fg) = f^2\delta(g) + g^2 \delta(f) + 2\delta(f)\delta(g).
\]
Since $f^2,g^2 \in \a$, we have that $\delta(fg) \in \a$ if and only if $\delta(f)\delta(g) \in \a :_R (2) = Q$. We have
\[
\ds \delta(f) = \delta(x+2) = \delta(x)+\delta(2) + \frac{x^2+2^2 - (x+2)^2}{2} = -1 + \frac{-4x}{2} = -1-2x
\]
and similarly $\delta(g) = \delta(y+2) = -1-2y$. If follows that $\delta(f)\delta(g) \notin Q$. This shows that $\delta(fg) \notin \a$, and hence $fg \in \a^2 \smallsetminus \a\derp{2}$.
\end{example}

\begin{lemma} \label{pdiff-powers-localize} Let $p\in \ZZ$ be a prime, $S$ be a ring with a $p$-derivation $\delta$, and $Q \in \Spec(S)$ be a prime ideal containing $p$. Let $\delta_Q$ be the extension of $\delta$ to $R_Q$. Then 
$$Q\derp{n} S_Q = (Q S_Q)^{\lr{n}_{p}},$$
where the left-hand side is the $p$-differential power with respect to $\delta$ and the right-hand side is the $p$-differential power with respect to $\delta_Q$.
\end{lemma}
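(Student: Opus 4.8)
The plan is to prove the equality $Q\derp{n} S_Q = (Q S_Q)\derp{n}$ by showing containments in both directions, exploiting the fact that $\delta_Q$ is the extension of $\delta$ and that $Q S_Q$ is the maximal ideal of the local ring $S_Q$ containing $p$. The natural strategy is to mimic the structure of the proof of Lemma~\ref{diff-powers-localize}, since the $p$-differential powers $\derp{n}$ play a role analogous to the differential powers $\dif{n}{B}$; the essential inputs are that $p$-derivations commute with the localization map in the appropriate sense (Proposition~\ref{existence-p-der}~(\ref{p-der-localize})) and that $Q S_Q \cap S = Q$.

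\textbf{Forward containment.} First I would show $Q\derp{n} S_Q \subseteq (Q S_Q)\derp{n}$. Let $f \in Q\derp{n}$, so that $\delta^a(f) \in Q$ for all $a \leq n-1$. The key observation is that $\delta_Q$ restricts to $\delta$ on $S$, so for $s \in S$ we have $\delta_Q^a(s/1) = \delta^a(s)/1 \in Q S_Q$ for all $a \leq n-1$. This handles elements of the form $f/1$; to handle a general element $f/t \in Q\derp{n} S_Q$ with $f \in Q\derp{n}$ and $t \notin Q$, I would argue that $t$ maps to a unit in $S_Q$ and that $(Q S_Q)\derp{n}$ is an ideal by Proposition~\ref{prop_pdiff_power}, so that $f/t = (1/t)(f/1) \in (Q S_Q)\derp{n}$ once $f/1$ is known to lie in it. This reduces everything to verifying $\delta_Q^a(f/1) \in Q S_Q$, which follows from the restriction property.

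\textbf{Reverse containment.} For $(Q S_Q)\derp{n} \subseteq Q\derp{n} S_Q$, suppose $f/t \in (Q S_Q)\derp{n}$, so $\delta_Q^a(f/t) \in Q S_Q$ for all $a \leq n-1$. I would clear denominators by multiplying by a suitable power of $t$: since $(Q S_Q)\derp{n}$ is an ideal and $t/1$ is a unit in $S_Q$, we have $f/1 \in (Q S_Q)\derp{n}$, i.e., $\delta_Q^a(f/1) \in Q S_Q$ for all $a \leq n-1$. Using again that $\delta_Q$ extends $\delta$, we get $\delta^a(f)/1 = \delta_Q^a(f/1) \in Q S_Q$, whence $\delta^a(f) \in Q S_Q \cap S = Q$ for all $a \leq n-1$. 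Thus $f \in Q\derp{n}$, and so $f/t \in Q\derp{n} S_Q$.

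\textbf{The main obstacle} I expect is bookkeeping around denominators: unlike differential operators, $p$-derivations are neither additive nor $S$-linear (they satisfy the twisted Leibniz and Carlitz identities \eqref{prod} and \eqref{sum}), so one cannot simply ``pull out'' the denominator $t$ from $\delta_Q^a(f/t)$ by an analogue of the commutator calculation used in Lemma~\ref{diff-powers-localize}. The clean way around this is to avoid computing $\delta_Q^a(f/t)$ directly and instead use that $(Q S_Q)\derp{n}$ and $Q\derp{n} S_Q$ are both ideals, reducing both containments to statements about elements of the form $f/1$ with $f \in S$. Once the argument is phrased in terms of the restriction identity $\delta_Q^a(s/1) = \delta^a(s)/1$ together with $Q S_Q \cap S = Q$, the proof becomes routine; the delicate point is simply to invoke the ideal property at the right moment rather than attempting a direct $p$-derivation computation on fractions.
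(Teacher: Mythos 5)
Your proof is correct and takes essentially the same approach as the paper's: the paper's (very terse) proof likewise reduces to elements of the form $r/1$ --- implicitly using that both $Q\derp{n}S_Q$ and $(QS_Q)\derp{n}$ are ideals --- and concludes from the identity $(\delta_Q)^a(r/1)=\delta^a(r)/1$ together with $QS_Q\cap S=Q$. Your write-up simply makes explicit the unit/ideal bookkeeping (and the reason one must avoid direct $p$-derivation computations on fractions) that the paper dismisses as ``immediate.''
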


\begin{proof} It suffices to verify that $\frac{r}{1} \in Q\derp{n} R_Q$ if and only if $\frac{r}{1} \in (Q R_Q)^{\lr{n}_{p}}$, which is immediate from the fact that $(\delta_Q)^a(\frac{r}{1})=\frac{\delta^a(r)}{1}$ for all $a\geq 0$.
\end{proof}

We now make a key definition of this article. We combine the action of differential operators and $p$-derivations in order to control symbolic powers of prime ideals that contain a given prime integer. The definition we write is very general, but we will later focus on a more restrictive setting.

\begin{definition} \label{defn_difM} Let $\delta$ be a $p$-derivation on $S$. Let $I$ be an ideal of $S$. The \emph{$n$-th mixed differential power} of $I$ \emph{with respect to $\delta$} is 
\begin{align*}
I\difM{n} & :=\bigcap_{a+b\leq n+1} (I\derp{a})\dif{b}{B}\\
&=\{ f \in S \ | \ (\delta^s \circ \partial)(f) \in I  \ \text{for all} \ \partial \in D^t_{S|B} \ \text{with} \ s+t\leq n-1\}.
\end{align*}
\end{definition}

	Note that given $a+b\leq n+1$, computing $I\derp{a}$ involves applying $\delta^s$ with $s \leqslant a-1$, while taking $J\dif{b}{B}$ requires taking differential operators $\partial \in D^{b-1}_{S|B}$, so that overall, to compute $I\difM{n}$ we need to use combinations of differentials and $p$-derivations of order up to $(a-1) + (b-1) \leqslant n-1$.
	
	A word of caution about the order of operations: an element $f \in (I\derp{a})\dif{b}{B}$ is one such that $\partial(f) \in I\derp{a}$ for all $\partial \in D^{b-1}_{S|B}$, so that $\delta^s \circ \partial(f) \in I$ for all $s \leqslant a-1$.

\begin{remark}
	Note that the definition of $I\difM{n}$ depends, in principle, on both $\delta$ and $B$. However, we will show in Corollary~\ref{mixed powers independent of delta} that this definition is actually independent of the choice of $\delta$ for prime ideals in our main setting.
\end{remark}

\begin{proposition}\label{prop_mix_diff_power} If $I$ is an ideal, then $I\difM{n}$ is an ideal. Moreover, if $Q \in \Spec(S)$ is a prime ideal that contains $p$, we have the following properties:
\begin{enumerate}
\item \label{prop_mdiff_power_2a} $Q\difM{n} \subseteq Q\difM{m}$ if $n \geqslant m$.
\item \label{prop_mdiff_power_1} $Q^n \subseteq Q\difM{n}$ for all $n$.

\item \label{prop_mdiff_power_3} $Q\difM{n}$ is a $Q$-primary ideal.
\item \label{prop_mdiff_power_4} $Q^{(n)} \subseteq Q\difM{n}$.
\end{enumerate}
\end{proposition}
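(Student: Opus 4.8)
The plan is to observe that $Q\difM{n}$ is assembled entirely from the two operations whose behavior we have already analyzed---$p$-differential powers and ordinary differential powers---so the whole proposition reduces to combining Proposition~\ref{prop_ordinary_dif} and Proposition~\ref{prop_pdiff_power}. Writing $Q\difM{n} = \bigcap_{a+b \leq n+1} (Q\derp{a})\dif{b}{B}$, where the pairs $(a,b)$ run over a finite index set, I would first note that each $Q\derp{a}$ is an ideal by Proposition~\ref{prop_pdiff_power}, hence each $(Q\derp{a})\dif{b}{B}$ is an ideal by Proposition~\ref{prop_ordinary_dif}~(\ref{prop_ordinary_dif_1}); a finite intersection of ideals is an ideal, which settles the first claim. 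For the monotonicity in~(\ref{prop_mdiff_power_2a}), I would argue directly from the description of $Q\difM{n}$ as the set of $f$ with $(\delta^s \circ \partial)(f) \in Q$ for all $\partial \in D^t_{S|B}$ with $s+t \leq n-1$: increasing $n$ only imposes more constraints, so $Q\difM{n} \subseteq Q\difM{m}$ whenever $n \geq m$.

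For the containment $Q^n \subseteq Q\difM{n}$ in~(\ref{prop_mdiff_power_1}), I would fix a pair $(a,b)$ with $a+b \leq n+1$ and an element $f \in Q^n$, and verify $f \in (Q\derp{a})\dif{b}{B}$. Given $\partial \in D^{b-1}_{S|B}$, Proposition~\ref{prop_ordinary_dif}~(\ref{prop_ordinary_dif_2}) gives $\partial(f) \in Q^{n-b+1}$; since $p \in Q$, Proposition~\ref{prop_pdiff_power}~(\ref{prop_pdiff_power_1}) places this in $Q\derp{n-b+1}$, and as $a \leq n-b+1$ the monotonicity $Q\derp{n-b+1} \subseteq Q\derp{a}$ of $p$-differential powers gives $\partial(f) \in Q\derp{a}$. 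Thus $f \in (Q\derp{a})\dif{b}{B}$ for every admissible pair, so $f \in Q\difM{n}$. (The index bookkeeping $s+t\leq n-1$ against $a+b\leq n+1$ is the one place to be careful, but no new difficulty arises.)

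The substantive step is primariness in~(\ref{prop_mdiff_power_3}), though here too the heavy lifting is already done. Since $Q \ni p$, the ideal $Q\derp{a}$ is $Q$-primary by Proposition~\ref{prop_pdiff_power}~(\ref{prop_pdiff_power_3}); feeding this $Q$-primary ideal into Proposition~\ref{prop_ordinary_dif}~(\ref{prop_ordinary_dif_3}) shows each $(Q\derp{a})\dif{b}{B}$ is again $Q$-primary. It then remains to invoke the standard fact that a finite intersection of $Q$-primary ideals is $Q$-primary: the radical of the intersection is $Q$, and if $xy$ lies in the intersection with $x \notin Q$, then $y$ lies in each factor by primariness, hence in the intersection. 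Therefore $Q\difM{n}$ is $Q$-primary. Finally,~(\ref{prop_mdiff_power_4}) is formal: by~(\ref{prop_mdiff_power_1}) we have $Q^n \subseteq Q\difM{n}$, by~(\ref{prop_mdiff_power_3}) the latter is $Q$-primary, and $Q^{(n)}$ is the smallest $Q$-primary ideal containing $Q^n$, whence $Q^{(n)} \subseteq Q\difM{n}$. I expect no genuine obstacle beyond correctly matching the indices of the two ambient propositions.
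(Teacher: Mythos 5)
Your proposal is correct and follows essentially the same route as the paper: write $Q\difM{n}=\bigcap_{a+b\leq n+1}(Q\derp{a})\dif{b}{B}$ and import the ideal, containment, and primariness statements from Proposition~\ref{prop_pdiff_power} and Proposition~\ref{prop_ordinary_dif}, then use that $Q^{(n)}$ is the smallest $Q$-primary ideal containing $Q^n$. The only cosmetic difference is that for part~(\ref{prop_mdiff_power_1}) the paper works with the extremal pairs $(a,b)=(n-t,t+1)$ while you handle arbitrary pairs via the monotonicity $Q\derp{n-b+1}\subseteq Q\derp{a}$; both are the same bookkeeping.
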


\begin{proof}
First of all, we show that $I\difM{n}$ is an ideal. Let $a,b$ be integers such that $a+b \leq n+1$. Then $I\derp{a}$ is an ideal by Proposition \ref{prop_pdiff_power}. As a consequence, we have that $\left(I\derp{a}\right)\dif{b}{B}$ is an ideal by Proposition \ref{prop_ordinary_dif} (\ref{prop_ordinary_dif_1}). Since $I\difM{n}$ is then defined as an intersection of ideals, the claim follows.

Part~(\ref{prop_mdiff_power_2a}) is immediate from the definition.

For part~(\ref{prop_mdiff_power_1}), we have that $\partial(Q^n) \subseteq Q^{n-t}$ for all $t \leq n-1$, and $\partial \in D^t_{S|B}$, by Proposition \ref{prop_ordinary_dif} (\ref{prop_ordinary_dif_2}). Since $Q^{n-t} \subseteq Q\derp{n-t}$ by Proposition \ref{prop_pdiff_power} (\ref{prop_pdiff_power_1}), it follows that $Q^n \subseteq \left(Q\derp{n-t}\right)\dif{t+1}{B}$. Because this holds for all $t \leq n-1$, we conclude that $Q^n \subseteq Q\difM{n}$.

Finally, parts (\ref{prop_mdiff_power_3}) and (\ref{prop_mdiff_power_4}) follow from the corresponding properties for differential powers and $p$-derivation powers. In fact, if $a$ and $b \in \NN$ are integers such that $a+b \leq n+1$, then observe that $Q\derp{a}$ is a $Q$-primary ideal by Proposition \ref{prop_pdiff_power} (\ref{prop_pdiff_power_3}). Therefore, the ideal $\left(Q\derp{a}\right)\dif{b}{B}$ is $Q$-primary by Proposition \ref{prop_ordinary_dif} (\ref{prop_ordinary_dif_3}). It then follows that $Q\difM{n} = \bigcap_{a+b\leq n+1} \left(Q\derp{a}\right)\dif{b}{B}$ is $Q$-primary, since it is a finite intersection of $Q$-primary ideals. Because the $n$-th symbolic power $Q^{(n)}$ is the smallest $Q$-primary ideal that contains $Q^n$, this concludes the proof.
\end{proof}

\begin{lemma}\label{mdiff-powers-localize} Let $\delta$ be a $p$-derivation on $R$. Let $Q \in \Spec(R)$ be a prime containing $p$. Then 
\[
\ds  Q\difM{n} R_Q = (Q R_Q)\difM{n},
\]
where $(Q R_Q)\difM{n}$ is the mixed differential power with respect to $\delta_Q$.
\end{lemma}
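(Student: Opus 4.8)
The plan is to localize the defining intersection one term at a time, feeding each factor into the two localization results already proved for the two kinds of differential powers. Recall that by Definition~\ref{defn_difM},
\[
Q\difM{n} = \bigcap_{a+b \leq n+1} (Q\derp{a})\dif{b}{B},
\]
a \emph{finite} intersection of ideals of $R$. Since localization at $Q$ is flat, it commutes with finite intersections of submodules of $R$, so that
\[
Q\difM{n} R_Q = \bigcap_{a+b \leq n+1} \big((Q\derp{a})\dif{b}{B}\big) R_Q .
\]
It therefore suffices to identify each localized term with $\big((Q R_Q)\derp{a}\big)\dif{b}{B}$; intersecting over all admissible $(a,b)$ then reproduces $(Q R_Q)\difM{n}$ by the definition of the mixed differential power of $Q R_Q$ taken with respect to $\delta_Q$.

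First I would fix a pair $(a,b)$ with $a+b \leq n+1$ and localize the outer differential power. To apply Lemma~\ref{diff-powers-localize} with $W = R \smallsetminus Q$ and $I = Q\derp{a}$, one must verify the contraction hypothesis $W^{-1}I \cap R = I$, together with the ambient assumption that $R$ is essentially of finite type over $B$ (which holds in our setting, $R$ being essentially smooth). The contraction is exactly where the only real input enters: by Proposition~\ref{prop_pdiff_power}~(\ref{prop_pdiff_power_3}) the ideal $Q\derp{a}$ is $Q$-primary, and a $Q$-primary ideal is always contracted from its localization at $Q$ (if $w s \in Q\derp{a}$ with $w \notin Q$, primariness forces $s \in Q\derp{a}$). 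With the hypothesis in hand, Lemma~\ref{diff-powers-localize}~(\ref{diff-loc-3}) yields
\[
\big((Q\derp{a})\dif{b}{B}\big) R_Q = \big((Q\derp{a}) R_Q\big)\dif{b}{B}.
\]

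Next I would localize the inner $p$-differential power. Applying Lemma~\ref{pdiff-powers-localize} with $n$ replaced by $a$ gives $(Q\derp{a}) R_Q = (Q R_Q)\derp{a}$, where the right-hand side is computed with respect to the extended $p$-derivation $\delta_Q$. Substituting this into the previous display produces
\[
\big((Q\derp{a})\dif{b}{B}\big) R_Q = \big((Q R_Q)\derp{a}\big)\dif{b}{B},
\]
and reassembling the finite intersection over all $(a,b)$ with $a+b \leq n+1$ gives precisely $(Q R_Q)\difM{n}$, as claimed.

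The main obstacle here is bookkeeping rather than depth: the one genuine point to get right is the contraction $W^{-1}(Q\derp{a}) \cap R = Q\derp{a}$ needed to invoke Lemma~\ref{diff-powers-localize}, which rests entirely on the primariness established in Proposition~\ref{prop_pdiff_power}~(\ref{prop_pdiff_power_3}). I would also confirm that the subring over which the differential operators are taken is unaffected by passing to $R_Q$ — this is exactly Lemma~\ref{diff-powers-localize}~(\ref{diff-loc-1}) — so that the operators $D^{b-1}_{R_Q|B}$ arising after localization coincide with those appearing in the definition of $(Q R_Q)\difM{n}$, and no spurious change of base ring slips in.
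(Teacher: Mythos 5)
Your proof is correct and takes exactly the route the paper intends: the paper's own proof is the single line that the lemma ``is a direct consequence of Lemmas~\ref{diff-powers-localize} and~\ref{pdiff-powers-localize},'' and your write-up simply supplies the bookkeeping that one-liner suppresses (flatness of localization applied to the finite defining intersection, the contraction hypothesis $W^{-1}(Q\derp{a}) \cap R = Q\derp{a}$ via the $Q$-primariness from Proposition~\ref{prop_pdiff_power}, and the base-ring independence from Lemma~\ref{diff-powers-localize}). No gaps; this is a faithful, fully detailed version of the paper's argument.
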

\begin{proof}
This is a direct consequence of Lemmas \ref{diff-powers-localize} and \ref{pdiff-powers-localize}.
\end{proof}

\begin{setting}\label{setting-containsP}
Let $p$ be a prime. Let $A=\ZZ$ or a DVR with uniformizer $p$. Let $R$ be an essentially smooth $A$-algebra that has a $p$-derivation $\delta$. Let $Q$ be a prime ideal of $R$ that contains $p$, and assume that the field extension $A/pA \hookrightarrow R_Q/QR_Q$ is separable.
\end{setting}

We note that polynomial rings over $\ZZ$, over Witt vectors of perfect fields, or over complete unramified DVRs of mixed characteristic and with perfect residue field satisfy Setting~\ref{setting-containsP}, by Proposition~\ref{existence-p-der}.

\begin{proposition} \label{prop_containment_m} In the context of Setting~\ref{setting-containsP}, suppose further that $(R,\m)$ is local. Then $\m\difM{n}=\m^n$.
\end{proposition}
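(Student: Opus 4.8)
The containment $\m^n \subseteq \m\difM{n}$ is Proposition~\ref{prop_mix_diff_power}~(\ref{prop_mdiff_power_1}), so the plan is to prove the reverse containment $\m\difM{n} \subseteq \m^n$ by contraposition: given $f \notin \m^n$, I will exhibit a composition $\delta^{s}\circ\partial$ with $\partial \in D^t_{R|A}$ and $s+t \le n-1$ such that $(\delta^{s}\circ\partial)(f)\notin \m$, which shows $f \notin \m\difM{n}$.

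Setup. Since $R$ is essentially smooth over the regular ring $A$, it is regular local; as $p\in\m$ and the extension $A/pA\hookrightarrow R/\m$ is separable, Lemma~\ref{free-basis} lets me choose $y_1,\dots,y_s$ so that $p,y_1,\dots,y_s$ is a regular system of parameters and $dy_1,\dots,dy_s$ extends to a free basis of $\Omega_{R|A}$. Feeding this basis into Lemma~\ref{EGA_existence_diff_operators} produces differential operators $\{D_\alpha\}$, indexed by multi-indices $\alpha$ in the $y$-variables, with $D_\alpha(y^\beta)=\binom{\beta}{\alpha}y^{\beta-\alpha}$; these are $A$-linear, so they commute with multiplication by powers of $p$. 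The associated graded ring is a polynomial ring $\gr_\m(R)\cong (R/\m)[P,Y_1,\dots,Y_s]$, so $f$ has a nonzero leading form of degree $s_0:=\mathrm{ord}_\m(f)$, and I may write $f\equiv \sum_{\alpha} u_\alpha\, p^{\,s_0-|\alpha|} y^\alpha \pmod{\m^{s_0+1}}$, where each $u_\alpha$ is a unit or $0$, not all zero. Since $f\notin\m^n$, we have $s_0\le n-1$.

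Isolating a pure power of $p$. Among the $\alpha$ with $u_\alpha$ a unit, I choose $\alpha_0$ maximal for the componentwise partial order, and set $m_0:=s_0-|\alpha_0|$. I claim $h:=D_{\alpha_0}(f)\equiv u_{\alpha_0}\,p^{m_0}\pmod{\m^{m_0+1}}$. Indeed, for each monomial I expand $D_{\alpha_0}\big(u_\alpha p^{\,s_0-|\alpha|}y^\alpha\big)=p^{\,s_0-|\alpha|}\big([D_{\alpha_0},u_\alpha](y^\alpha)+u_\alpha D_{\alpha_0}(y^\alpha)\big)$, exactly as in Corollary~\ref{fixing the mix up}. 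Maximality of $\alpha_0$ forces $\alpha\not\ge\alpha_0$ for every other $\alpha$ in the support, so $D_{\alpha_0}(y^\alpha)=0$ there; the commutator $[D_{\alpha_0},u_\alpha]$ has order $\le|\alpha_0|-1$, so by Proposition~\ref{prop_ordinary_dif}~(\ref{prop_ordinary_dif_2}) it sends $y^\alpha$ into $(y_1,\dots,y_s)^{|\alpha|-|\alpha_0|+1}$, whence, after multiplying by $p^{\,s_0-|\alpha|}$, every such term lies in $\m^{m_0+1}$. The term $\alpha=\alpha_0$ contributes $u_{\alpha_0}p^{m_0}$ modulo $\m^{m_0+1}$, and $D_{\alpha_0}$ sends the tail in $\m^{s_0+1}$ into $\m^{m_0+1}$ by the same proposition. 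Thus $h$ has order exactly $m_0$, with leading form a unit times $P^{m_0}$.

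Stripping the power of $p$ with $\delta$. It remains to iterate the $p$-derivation. Writing $h=w\,p^{m_0}+r$ with $w$ a unit and $r\in\m^{m_0+1}$, I argue by induction on $a$ that $\delta^a(h)\equiv w^{p^a} p^{\,m_0-a}\pmod{\m^{m_0-a+1}}$ for $0\le a\le m_0$: the inductive step uses the product rule \eqref{prod}, the sum rule \eqref{sum} (whose correction term $\mathcal{C}_p$ lands in very high powers of $\m$ by Remark~\ref{Remark_Cp}), the identity $\delta(p^{j})=p^{j-1}-p^{pj-1}$ from Remark~\ref{p-der-p-powers}, and the inclusion $\delta(\m^{j})\subseteq\m^{j-1}$ from Proposition~\ref{prop_pdiff_power}~(\ref{prop_pdiff_power_1}), to see that only the summand $w^{p}\delta(p^{\,m_0-a})$ contributes in the lowest degree. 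At $a=m_0$ this gives $\delta^{m_0}(h)\equiv w^{p^{m_0}}\not\equiv 0\pmod\m$, so $(\delta^{m_0}\circ D_{\alpha_0})(f)\notin\m$ while $m_0+|\alpha_0|=s_0\le n-1$, yielding $f\notin\m\difM{n}$. The main obstacle is precisely this last step: since $\delta$ is neither additive nor multiplicative, its iterates on a sum are delicate, and a careless choice of target monomial could let the $y$-directions leak into the $p$-direction and cancel the surviving coefficient. Choosing $\alpha_0$ maximal is what forces $h$ to have a pure $p$-power leading form, removing any such interference and reducing the final computation to a clean one-term induction.
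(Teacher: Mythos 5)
Your proof is correct, and it shares the paper's skeleton---write an order-$s_0$ element as a sum of units times monomials in $p,y_1,\dots,y_s$ plus a tail in $\m^{s_0+1}$, apply a well-chosen $D_{\alpha_0}$ to isolate a unit times $p^{m_0}$ modulo $\m^{m_0+1}$, then detect that power of $p$ using $\delta$---but it diverges from the paper in two genuine ways. First, the paper picks $\alpha$ with $|\alpha|$ maximal in \emph{total degree} and kills the competing terms via Corollary~\ref{fixing the mix up} (for equal degree) and $p$-power counting (for lower degree), whereas you pick $\alpha_0$ maximal for the \emph{componentwise} order so that $D_{\alpha_0}(y^\alpha)=0$ for every other $\alpha$ in the support, leaving only commutator terms to be bounded by degree counting. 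Both selections work; just note that your appeal to Proposition~\ref{prop_ordinary_dif}~(\ref{prop_ordinary_dif_2}) literally applies only when $|\alpha|\geq |\alpha_0|-1$, though for smaller $|\alpha|$ the prefactor $p^{s_0-|\alpha|}$ already lies in $(p^{m_0+1})\subseteq\m^{m_0+1}$, so nothing breaks. Second, and more substantially, the paper never iterates $\delta$ on the perturbed element $h$: it shows $D_\alpha(f)\in p^t+\m\derp{t+1}$ and concludes $D_\alpha(f)\notin\m\derp{t+1}$ because $\m\derp{t+1}$ is an \emph{ideal} (Proposition~\ref{prop_pdiff_power}) that does not contain $p^t$ (Remark~\ref{p-der-p-powers}); all of the non-additivity of $\delta$ is thereby absorbed, once and for all, into the proof that $p$-differential powers are ideals. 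You instead run the induction $\delta^a(h)\equiv w^{p^a}p^{m_0-a}\ (\mathrm{mod}\ \m^{m_0-a+1})$ by hand, managing the correction term $\mathcal{C}_p$ from Remark~\ref{Remark_Cp}, the product rule, and $\delta(\m^j)\subseteq\m^{j-1}$ at each step; the computation is valid (every error term does land in $\m^{m_0-a}$), and it makes the detection mechanism more transparent, at the price of redoing concretely what the paper delegates to its structural results on $\m\derp{t+1}$.
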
 

\begin{proof}
If $p,y_1,\ldots,y_d$ is a minimal generating set for the maximal ideal $\m$, then, by Lemma~\ref{free-basis}, $dy_1, \ldots, dy_d$ is part of a free basis for $\Omega_{R|A}$. It follows from Theorem~\ref{EGA_existence_diff_operators} that, for every $\alpha = (\alpha_1,\ldots,\alpha_d) \in \NN^d$, there exists a differential operator $D_\alpha \in D^{|\alpha|}_{R|A}$ such that $D_\alpha(y^\beta) = {\beta \choose \alpha} y^{\beta-\alpha}$ for all $\beta = (\beta_1,\ldots,\beta_d) \in \NN^d$ such that $\beta_i\geq \alpha_i$ for all $i$. Furthermore, we have $D_\alpha(y^\beta) = 0$ for all $\beta$ for which $\beta_i<\alpha_i$ for some $i$.
	
The containment $\m^n \subseteq \m\difM{n}$ always holds. For the converse, assume that there exists $f \in \m\difM{n}$ of order $s < n$, meaning that $f \in \m^s$, $f \notin \m^{s+1}$. Then, we can write $f = \sum_i u_i p^{s-|\alpha_i|} y^{\alpha_i} + g$ for some units $u_i \in R$, some $\alpha_i$ with $|\alpha_i| \leq s$, and some $g \in \m^{s+1}$. Fix some multi-index $\alpha \in \NN^d$ and a unit $u$ such that $u p^{s-|\alpha|}y^\alpha$ appears in the expression of $f$ as above, with $|\alpha|$ maximal as such. Observe that multiplying by a unit does not affect whether or not $f$ belongs to the ideals $\m\difM{n}$ and $\m^n$. Therefore, after multiplying by $u^{-1}$, we may assume that $p^{s-|\alpha|}y^\alpha$ appears in the support of $f$, with $|\alpha|$ maximal. Consider the corresponding differential operator $D_\alpha \in D^{s}_{R|A}$. Let $t=s-|\alpha|$ and observe that $D_{\alpha} \left( p^{t}y^\alpha \right) = p^tD_\alpha(y^\alpha) = p^t$. Recall that $p^t\notin\m\derp{t+1}$ by Remark~\ref{p-der-p-powers}. For the remaining $\alpha_i \ne \alpha$ with $|\alpha_i| = |\alpha|$, we have 
\[D_{\alpha} \left( u_i p^{t} y^{\alpha_i} \right) = p^t D_{\alpha} \left( u_i y^{\alpha_i} \right) \in p^t\left( y_1, \ldots, y_d \right) \subseteq \m^{t+1} \subseteq \m\derp{t+1},\] by Corollary~\ref{fixing the mix up} and Proposition~\ref{prop_pdiff_power}~(\ref{prop_pdiff_power_1}). For $\alpha_i$ such that $|\alpha_i| < |\alpha|$, we must have $s-|\alpha_i| \geq t+1$, so that \[D_\alpha(u_ip^{s-|\alpha_i|}y^{\alpha_i}) =p^{s-|\alpha_i|}D_\alpha(u_iy^{\alpha_i}) \in (p^{t+1}) \subseteq \m\derp{t+1},\]
again using Proposition \ref{prop_ordinary_dif}~(\ref{prop_pdiff_power_1}). Finally, note that $D_\alpha(g) \in \m^{t+1}$ by Proposition~\ref{prop_ordinary_dif}~(\ref{prop_ordinary_dif_2}), and thus $D_\alpha(g) \in \m\derp{t+1}$ as well. Combining these facts together, and using that $\m \derp{t+1}$ is an ideal, we obtain
\[
\ds D_{\alpha} \left(f \right) = p^t +  D_\alpha\left(\sum_{{\tiny \begin{array}{c} \alpha_i \ne \alpha \\ |\alpha_i|=|\alpha| \end{array}}} u_ip^ty^{\alpha_i}\right) + D_\alpha\left(\sum_{{\tiny \begin{array}{c} |\alpha_i|< |\alpha| \end{array}}} u_ip^{s-|\alpha_i|}y^{\alpha_i}\right) +D_\alpha(g) \notin \m\derp{t+1}.
\]
Thus, $f \notin (\m\derp{t+1})\dif{|\alpha|+1}{A}$, so that $f\notin \m\difM{s+1}$. This contradicts the assumption that $f \in \m\difM{n} \subseteq \m\difM{s+1}$, where the last containment follows from Proposition \ref{prop_pdiff_power} (\ref{prop_pdiff_power_2b}).
\end{proof}

\begin{theorem} \label{thm_mdiff=symb} In the context of Setting~\ref{setting-containsP}, one has the equality $Q^{(n)} = Q\difM{n}$.
\end{theorem}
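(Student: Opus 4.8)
The plan is to reduce the global statement $Q^{(n)} = Q\difM{n}$ to the local computation already carried out in Proposition~\ref{prop_containment_m}, by localizing at $Q$ and invoking the structure theory that makes $R_Q$ look like a localized polynomial ring over $A$. The containment $Q^{(n)} \subseteq Q\difM{n}$ is already established in Proposition~\ref{prop_mix_diff_power}~(\ref{prop_mdiff_power_4}), so the entire content is the reverse containment $Q\difM{n} \subseteq Q^{(n)}$. Since $Q\difM{n}$ is $Q$-primary by Proposition~\ref{prop_mix_diff_power}~(\ref{prop_mdiff_power_3}), and $Q^{(n)} = Q^n R_Q \cap R$, it suffices to check the equality after localizing at $Q$; that is, it is enough to prove $(QR_Q)\difM{n} = (QR_Q)^n$ inside the local ring $R_Q$.

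\textbf{Key steps.} First I would localize: by Lemma~\ref{mdiff-powers-localize}, $Q\difM{n} R_Q = (QR_Q)\difM{n}$, where the right-hand mixed differential power is taken with respect to the extended $p$-derivation $\delta_Q$, which exists by Proposition~\ref{existence-p-der}~(\ref{p-der-localize}). Second, I would record that $(R_Q, QR_Q)$ is a local ring essentially smooth over $A$, with $p \in QR_Q$, and that it still has a $p$-derivation and the separability hypothesis on $A/pA \hookrightarrow R_Q/QR_Q$ is inherited; in other words $R_Q$ together with its maximal ideal $QR_Q$ satisfies Setting~\ref{setting-containsP}. Third, I would apply Proposition~\ref{prop_containment_m} to the local ring $R_Q$, with $\m = QR_Q$, to conclude $(QR_Q)\difM{n} = (QR_Q)^n$. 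Finally I would contract back: intersecting with $R$ and using $Q\difM{n} = Q\difM{n}R_Q \cap R$ (valid because $Q\difM{n}$ is $Q$-primary) together with $(QR_Q)^n \cap R = Q^{(n)}$ yields
\[
Q\difM{n} = (QR_Q)\difM{n} \cap R = (QR_Q)^n \cap R = Q^{(n)}.
\]

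\textbf{The main obstacle} is verifying that the localization $R_Q$ genuinely satisfies the hypotheses of Setting~\ref{setting-containsP} so that Proposition~\ref{prop_containment_m} applies cleanly. Essential smoothness of $R_Q$ over $A$ is automatic since $R_Q$ is a further localization of $R$, and the separability of the residue field extension is unchanged because $R_Q/QR_Q$ is exactly the residue field that already appears in Setting~\ref{setting-containsP}; the existence of a $p$-derivation on $R_Q$ is the substantive point, supplied by Proposition~\ref{existence-p-der}~(\ref{p-der-localize}). One also needs to know that $\m\difM{n}$ computed in $R_Q$ uses this same $\delta_Q$ as the localized mixed power in Lemma~\ref{mdiff-powers-localize}, so that the two bookkeeping conventions agree; this is exactly the content of that lemma. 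Apart from this matching of hypotheses, every ingredient is already in place, and the proof is short.
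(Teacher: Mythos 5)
Your proposal is correct and follows essentially the same route as the paper's own proof: reduce to the local ring $(R_Q, QR_Q)$ via $Q$-primariness, identify $Q\difM{n}R_Q$ with $(QR_Q)\difM{n}$ using Lemma~\ref{mdiff-powers-localize}, and invoke Proposition~\ref{prop_containment_m} to get $(QR_Q)\difM{n} = (QR_Q)^n = Q^{(n)}R_Q$. Your additional care in checking that $R_Q$ inherits the hypotheses of Setting~\ref{setting-containsP} (essential smoothness, the extended $p$-derivation $\delta_Q$, and separability of the residue field extension) is exactly the bookkeeping the paper leaves implicit.
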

\begin{proof}
Since both $Q^{(n)}$ and $Q\difM{n}$ are $Q$-primary, it is enough to show equality locally at $Q$. After localizing, we have that $Q^{(n)}R_Q \cong Q^nR_Q$. Moreover, Lemma~\ref{mdiff-powers-localize} implies that \[Q\difM{n}R_Q \cong (QR_Q)\difM{n}  = (QR_Q)^n\] by Proposition~\ref{prop_containment_m} applied to the local ring $(R_Q,QR_Q)$.
\end{proof}

\begin{corollary} 
In the context of Setting~\ref{setting-containsP}, suppose further that $R$ is smooth over $A$. Then
\[
\ds Q^{(n)}=\bigcap_{\m \in \mathcal{B}} \m^n,
\]
for all $n \geq 1$, where $\mathcal{B}= \Max\Spec(R) \cap \V(Q)$.
\end{corollary}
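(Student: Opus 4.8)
The plan is to reduce the statement to one about mixed differential powers via Theorem~\ref{thm_mdiff=symb}, and then to run the same ``Jacobson plus monotonicity'' argument used for the corollary following Theorem~\ref{thm_dif=symb}. Since we are in the context of Setting~\ref{setting-containsP}, Theorem~\ref{thm_mdiff=symb} gives $Q^{(n)} = Q\difM{n}$, so it suffices to prove $Q\difM{n} = \bigcap_{\m \in \mathcal{B}} \m^n$. First I would observe that every $\m \in \mathcal{B}$ itself satisfies the hypotheses of Setting~\ref{setting-containsP} with $Q$ replaced by $\m$: such an $\m$ is prime and contains $Q \supseteq (p)$, the ring $R$ still carries the fixed $p$-derivation $\delta$, and the residue field extension $A/pA \hookrightarrow R_\m/\m R_\m = R/\m$ is separable because $R/pR$ is smooth over the field $k = A/pA$ and smooth algebras over a field have separable residue fields at their closed points. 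Applying Theorem~\ref{thm_mdiff=symb} to each such $\m$, and using that $\m^{(n)} = \m^n$ as $\m$ is maximal, yields $\m^n = \m\difM{n}$; hence $\bigcap_{\m \in \mathcal{B}} \m^n = \bigcap_{\m \in \mathcal{B}} \m\difM{n}$, and the problem reduces to the identity $Q\difM{n} = \bigcap_{\m \in \mathcal{B}} \m\difM{n}$.

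For the containment $Q\difM{n} \subseteq \bigcap_{\m \in \mathcal{B}} \m\difM{n}$, I would note that $I \mapsto I\difM{n}$ is monotone in $I$, which is immediate from Definition~\ref{defn_difM}: if $(\delta^s \circ \partial)(f) \in Q$ for all admissible $s$ and $\partial$, then a fortiori $(\delta^s \circ \partial)(f)$ lies in every $\m \supseteq Q$. For the reverse containment, the key input is the identity $Q = \bigcap_{\m \in \mathcal{B}} \m$. This holds because $R/pR$ is finitely generated over the field $k = A/pA$, hence a Jacobson ring, so the prime (radical) ideal $Q/pR$ equals the intersection of the maximal ideals of $R/pR$ containing it; pulling these back to $R$ and noting that they correspond exactly to the elements of $\mathcal{B}$ (each of which contains $p$) gives $Q = \bigcap_{\m \in \mathcal{B}} \m$. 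Granting this, let $f \in \bigcap_{\m \in \mathcal{B}} \m\difM{n}$, and fix $\partial \in D^t_{R|A}$ and $s$ with $s + t \leq n-1$. Then $(\delta^s \circ \partial)(f) \in \m$ for every $\m \in \mathcal{B}$, so $(\delta^s \circ \partial)(f) \in \bigcap_{\m \in \mathcal{B}} \m = Q$; as $s$, $t$, and $\partial$ were arbitrary, $f \in Q\difM{n}$.

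Chaining these equalities gives $Q^{(n)} = Q\difM{n} = \bigcap_{\m \in \mathcal{B}} \m\difM{n} = \bigcap_{\m \in \mathcal{B}} \m^n$, as desired. I expect the only genuinely non-formal step to be the verification that each $\m \in \mathcal{B}$ meets the separability hypothesis of Setting~\ref{setting-containsP}; this is precisely where the strengthened assumption that $R$ is \emph{smooth} (rather than merely essentially smooth) over $A$ is used, both to guarantee separable residue fields and, through the Jacobson property of $R/pR$, to ensure that $Q$ is cut out by the maximal ideals lying over it. Everything else is a formal consequence of Theorem~\ref{thm_mdiff=symb}, the monotonicity of mixed differential powers in the underlying ideal, and the equality of symbolic and ordinary powers for maximal ideals.
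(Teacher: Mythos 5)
Your proof follows the same route as the paper's: reduce via Theorem~\ref{thm_mdiff=symb} to the identity $Q\difM{n} = \bigcap_{\m \in \mathcal{B}} \m^n$, use the Jacobson property of $R/pR$ to get $Q = \bigcap_{\m \in \mathcal{B}} \m$, prove one containment by monotonicity of $I \mapsto I\difM{n}$ and the other by the definitional argument, and identify $\m\difM{n}$ with $\m^n$ at each $\m \in \mathcal{B}$. (You obtain this last identification by applying Theorem~\ref{thm_mdiff=symb} to $\m$ together with $\m^{(n)}=\m^n$; the paper instead cites Proposition~\ref{prop_containment_m}, which amounts to the same thing after localizing via Lemma~\ref{mdiff-powers-localize}. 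Note also that for the containment $\bigcap_{\m}\m^n \subseteq \bigcap_{\m}\m\difM{n}$ only the elementary inclusion $\m^n \subseteq \m\difM{n}$ from Proposition~\ref{prop_mix_diff_power} is needed, with no separability hypothesis.)

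The gap is in your verification that each $\m \in \mathcal{B}$ satisfies Setting~\ref{setting-containsP}. The assertion that smooth algebras over a field have separable residue fields at their closed points is false over an imperfect base field; the paper's own example disproves it: $K[x]$ with $K=\FF_p(t)$ is smooth over $K$, yet the residue field at the closed point $(x^p-t)$ is $\FF_p(t^{1/p})$, purely inseparable over $K$. Your claim is harmless when $A/pA$ is perfect (e.g.\ $A=\ZZ$, or Witt vectors of a perfect field), but then the correct justification is simply that every extension of a perfect field is separable --- smoothness plays no role. In the generality that Setting~\ref{setting-containsP} actually allows, $A$ may be a DVR with imperfect residue field $k$, separability is assumed only at $Q$, and it can genuinely fail at maximal ideals above $Q$: take $A=\ZZ[t]_{(p)}$, $R=A[x]$, $Q=pR$ (so that $k \hookrightarrow R_Q/QR_Q = k(x)$ is separable) and $\m = (p, x^p-t) \in \mathcal{B}$. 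For such an $\m$ the equality $\m\difM{n}=\m^n$ is unjustified, and the paper's final example shows that the conclusion of Proposition~\ref{prop_containment_m} can actually fail at a prime with inseparable residue field for a suitable $p$-derivation, so this is not a removable formality. The fix is to assume $A/pA$ perfect, or to assume separability of $A/pA \hookrightarrow R/\m$ for every $\m \in \mathcal{B}$. In fairness, the paper's own proof invokes Proposition~\ref{prop_containment_m} at each $\m$ without checking this hypothesis, so it carries the same hidden gap; your write-up surfaces the needed hypothesis but then closes it with a false general fact.
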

\begin{proof}
By Theorem~\ref{thm_mdiff=symb}, it is sufficient to show that $Q\difM{n} = \bigcap_{\m \in \mathcal{B}} \m^n$. Since $R$ is smooth over $A$ by assumption, we have that $R/pR$ is an algebra of finite type over the field $A/pA$. In particular, $R/pR$ is a Hilbert-Jacobson ring, and thus $Q = \bigcap_{\m \in \mathcal{B}} \m$. Observe that $Q\difM{n} \subseteq \m\difM{n}$ for all $\m \in \mathcal{B}$ is clear, since $Q \subseteq \m$. To prove the converse, let $f \in \bigcap_{\m \in \mathcal{B}} \m^n$, so that $f \in \m\difM{n}$ for all $\m \in \mathcal{B}$, by Proposition \ref{prop_containment_m}. Therefore $(\delta^a \circ \partial)(f) \in \bigcap_{\m \in \mathcal{B}} \m = Q$ for any given $p$-derivation $\delta$ and any differential operator $\partial \in D^{n-1-a}_{R|A}$. It follows that $f \in Q\difM{n}$, as desired. 
\end{proof}

Recall that the definition of mixed differential powers depends, a priori, on the chosen $p$-derivation. However, Theorem~\ref{thm_mdiff=symb} immediately gives that this is not the case in the context of Setting \ref{setting-containsP}. We record this fact in the following corollary. 
\begin{corollary}\label{mixed powers independent of delta} In the context of Setting~\ref{setting-containsP}, the ideal $Q\difM{n}$ is independent of the choice of $p$-derivation $\delta$. Moreover, 
\[
	Q^{(n)}=Q\difM{n} =\{ f \in R \ | \ (\delta \circ \partial)(f) \in I  \ \text{for all} \ \delta\in \Der^{a}_p(R), \ \partial \in D^b_{R|A} \ \text{with} \ a+b\leq n-1\}.
\]
\end{corollary}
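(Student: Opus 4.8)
The plan is to deduce Corollary~\ref{mixed powers independent of delta} directly from Theorem~\ref{thm_mdiff=symb}, exploiting the fact that the symbolic power $Q^{(n)}$ is an intrinsic object that makes no reference to any choice of $p$-derivation. The first observation is that Theorem~\ref{thm_mdiff=symb} asserts $Q^{(n)} = Q\difM{n}$ in the context of Setting~\ref{setting-containsP}, and that this setting only requires the \emph{existence} of some $p$-derivation $\delta$ on $R$, not a distinguished one. Hence, if $\delta$ and $\delta'$ are two $p$-derivations on $R$, applying Theorem~\ref{thm_mdiff=symb} with each of them in turn yields $Q\difM{n}_{\delta} = Q^{(n)} = Q\difM{n}_{\delta'}$, where I temporarily decorate the notation to indicate the chosen $p$-derivation. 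Since the common value $Q^{(n)}$ does not depend on $\delta$, this already establishes the first assertion of the corollary.

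For the displayed formula, I would first verify that the right-hand side is in fact well-defined independently of any single $p$-derivation, since it ranges over \emph{all} compositions $\delta_1 \circ \cdots \circ \delta_a \in \Der^a_p(R)$ rather than powers of a fixed $\delta$. The plan is to show this set, call it $T$, satisfies $Q\difM{n} \subseteq T \subseteq Q^{(n)}$, and then invoke Theorem~\ref{thm_mdiff=symb} to collapse the chain. For the inclusion $T \subseteq Q\difM{n}$ (with respect to any fixed $\delta$): an element $f \in T$ satisfies $(\delta' \circ \partial)(f) \in Q$ for every $\delta' \in \Der^a_p(R)$ and $\partial \in D^b_{R|A}$ with $a+b \leq n-1$; specializing $\delta'$ to be the $a$-fold composite $\delta^a$ (which is an element of $\Der^a_p(R)$) recovers exactly the defining condition of $Q\difM{n}$ from Definition~\ref{defn_difM}, so $T \subseteq Q\difM{n} = Q^{(n)}$.

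For the reverse inclusion $Q^{(n)} \subseteq T$, I would argue that $Q^{(n)}$ is annihilated modulo $Q$ by every composite operator $\delta' \circ \partial$ of total order at most $n-1$. The cleanest route is to localize at $Q$, where $Q^{(n)}R_Q = Q^nR_Q$, and show that any such operator $\delta' \circ \partial$ with $a + b \leq n-1$ carries $Q^n$ into $Q$. Here $\partial \in D^b_{R|A}$ takes $Q^n$ into $Q^{n-b}$ by Proposition~\ref{prop_ordinary_dif}~(\ref{prop_ordinary_dif_2}), and then each application of a $p$-derivation drops the exponent by at most one, by the same argument as Proposition~\ref{prop_pdiff_power}~(\ref{prop_pdiff_power_1}) applied to the $Q$-primary ideal $Q$ itself (which contains $p$); after $a$ applications of $p$-derivations we land in $Q^{n-b-a} \subseteq Q$. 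The mild subtlety is that the operators in $\Der^a_p(R)$ are composites of possibly \emph{distinct} $p$-derivations, so one cannot literally quote Proposition~\ref{prop_pdiff_power}~(\ref{prop_pdiff_power_1}) for a single fixed $\delta$; however, its proof only uses the defining identities~\eqref{prod} and~\eqref{sum} together with $p \in Q$, all of which hold for every individual $p$-derivation, so the inductive order-drop estimate goes through verbatim for arbitrary composites. This is the one place requiring a small amount of care, but it is a routine adaptation rather than a genuine obstacle. Combining the two inclusions with Theorem~\ref{thm_mdiff=symb} gives $Q^{(n)} = Q\difM{n} = T$, completing the proof.
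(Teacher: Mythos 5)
Your proof is correct, and its skeleton matches the paper's: both arguments sandwich the right-hand set $T$ between the easy specialization $T \subseteq Q\difM{n}$ (replace an arbitrary composite by $\delta^a$ for the fixed $\delta$) and the inclusion $Q^{(n)} \subseteq T$, then collapse the chain with Theorem~\ref{thm_mdiff=symb}, which also yields the independence claim exactly as you argue. The genuine difference is how $Q^{(n)} \subseteq T$ is established. The paper argues globally, with no localization: it observes that $T=\bigcap_{a+b\leq n+1}\big(Q^{\lr{a}_{\Der_p}}\big)\dif{b}{A}$, where $Q^{\lr{a}_{\Der_p}}$ is the $p$-differential power with respect to \emph{all} $p$-derivations simultaneously, and then simply quotes Proposition~\ref{prop_pdiff_power}~(\ref{prop_pdiff_power_3}), (\ref{prop_pdiff_power_4}) (whose $\Der_p$-flavored halves were proved in advance precisely for this purpose) together with Proposition~\ref{prop_ordinary_dif}~(\ref{prop_ordinary_dif_3}), (\ref{prop_ordinary_dif_4}): each $Q^{\lr{a}_{\Der_p}}$ is $Q$-primary and contains $Q^{(a)}$, hence each $\big(Q^{\lr{a}_{\Der_p}}\big)\dif{b}{A}$ is $Q$-primary and contains $Q^{(n)}$. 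You instead localize at $Q$, use $Q^{(n)}R_Q=Q^nR_Q$, and run an order-drop estimate; this works, and it makes the mechanism more visible, but it silently uses that all the operators involved extend compatibly to $R_Q$ --- Proposition~\ref{existence-p-der}~(\ref{p-der-localize}) for each $p$-derivation in the composite, and Lemma~\ref{lemma_Grothendieck} for $\partial$ --- so that $(\delta'\circ\partial)(f)$ can be computed in $R_Q$ and contracted back along $QR_Q\cap R=Q$; a complete writeup should cite these. Also, the ``small amount of care'' you flag for composites of distinct $p$-derivations is already packaged in the paper: the $\Der_p$-version of Proposition~\ref{prop_pdiff_power}~(\ref{prop_pdiff_power_1}), namely $\a^n\subseteq \a^{\lr{n}_{\Der_p}}$, says exactly that arbitrary composites of at most $n-1$ $p$-derivations carry $\a^n$ into $\a$; alternatively, since the single-application statement $\delta(\a^n)\subseteq \a^{n-1}$ holds for \emph{every} $p$-derivation, you can just apply it once per factor, with no need to reopen its proof. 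One expository slip to fix: you announce the chain $Q\difM{n}\subseteq T\subseteq Q^{(n)}$ but then prove the reverse inclusions $T\subseteq Q\difM{n}$ and $Q^{(n)}\subseteq T$; the ones you actually prove are the natural and sufficient ones, so only the announcement needs correcting.
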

\begin{proof}
Write $Q^{\lr{n}_\star}$ for the set on the right-hand side, and recall the following definition 
\[
\ds Q^{\lr{n}_{\Der_p}}=\{f\in R \ | \ \Der_p^a(f) \subseteq Q \ \text{for all} \ a\leq n-1\}.
\] 
Observe that $Q^{\lr{n}_\star} =\bigcap_{a+b\leq n+1} (Q^{\lr{a}_{\Der_p}})\dif{b}{A}$; in particular, $Q^{\lr{n}_\star}$ is an ideal by Proposition~\ref{prop_ordinary_dif}~(\ref{prop_ordinary_dif_1}) and Proposition~\ref{prop_pdiff_power}. By Proposition~\ref{prop_pdiff_power} (\ref{prop_pdiff_power_4}), we have that $Q^{(a)} \subseteq Q^{\lr{a}_{\Der_p}}$ for all $a$. Moreover, $Q^{\lr{a}_{\Der_p}}$ is $Q$-primary by Proposition~\ref{prop_pdiff_power}~(\ref{prop_pdiff_power_3}), and it follows from Proposition~\ref{prop_ordinary_dif}~(\ref{prop_ordinary_dif_4}) that $Q^{(n)} \subseteq \big(Q^{\lr{a}_{\Der_p}}\big)\dif{b}{A}$ for all $a+b \leq n+1$. Therefore, we have $Q^{(n)}\subseteq Q^{\lr{n}_\star}$. It is clear from the definitions that $Q^{\lr{n}_\star}\subseteq Q\difM{n}$ and, by Theorem \ref{thm_mdiff=symb}, we finally conclude that $Q^{\lr{n}_\star}=Q\difM{n}=Q^{(n)}$. 
	\end{proof}
	
We point out that, in the Definition \ref{defn_difM} of mixed differential powers, the order in which $p$-derivations and differential operators are performed is crucial, as the following example illustrates.
	
\begin{example}
Let $R=\ZZ_p[x]$, and $Q=(p,x)$. Let $\phi:R \to R$ be the lift of Frobenius that satisfies $\phi(x)=x^p$. This induces a $p$-derivation $\delta$ on $R$ such that $\delta(x)=0$. Also, note that $D^1_{R|\ZZ_p} = R \oplus R \frac{d}{dx}$ by Lemma~\ref{EGA_existence_diff_operators}. One can check by direct computation that $\frac{d^2}{dx^2}(px)$, $(\frac{d}{dx}\circ \delta)(px)$, and $\delta^2(px)$ all belong to $Q$. However, $px \notin Q^{(3)} = Q^3$, where the last equality holds because $Q$ is a maximal ideal. Note that, consistently with Theorem \ref{thm_mdiff=symb}, we have that $px \notin Q\difM{3}$, since $(\delta \circ \frac{d}{dx})(px) \notin Q$.
\end{example}

As in the equicharacteristic case, the essential smoothness hypothesis is important. Many nonsmooth algebras do not admit $p$-derivations, in which case the mixed differential powers are not defined. However, even in nonsmooth algebras with $p$-derivations, the hypothesis is necessary.

\begin{example}
	Let $R=\ZZ_p[x,y,z]/(y^2-xz)$. The lift of the Frobenius $\Phi(f(u,v))=f(u^p,v^p)$ on $\ZZ_p[u,v]$ restricts to a lift of the Frobenius on $\ZZ_p[u^2,uv,v^2]\cong R$, so $R$ admits a $p$-derivation. In particular, mixed differential powers are defined. Take $Q=(p,x,y)$ and $\m=(p,x,y,z)$. Then, $x\in Q^{(2)} \smallsetminus \m^2$. However, it is evident from Definition~\ref{defn_difM} that $Q\difM{2}\subseteq \m\difM{2}$. Thus, the conclusion of Theorem~\ref{thm_mdiff=symb} cannot hold.
\end{example}

The following example shows that the separability hypothesis in Theorem~\ref{thm_mdiff=symb} is necessary. Given that a similar hypothesis is required for the Zariski-Nagata theorem in equicharacteristic $p>0$, this is to be expected.

\begin{example}
	Let $A=\ZZ[t]_{(p)}$, $R=\ZZ[t,x]_{Q}$, where $Q = \left( p, x^p-t \right)$. We note that these satisfy all of the conditions of Setting~\ref{setting-containsP}, except the separability assumption on $A/pA \hookrightarrow R_Q / Q R_Q$. In this example, $Q^{(2)}=Q^2$. However, we will show that there exists a $p$-derivation on $R$ such that $Q\difM{2} \neq Q^{(2)}$. Set $w=x^p-t$. We can write $\ZZ[t,x] = \ZZ[w,x]$. Then, the map
	\[\psi(f(w,x)) = \frac{f(w^p,x^p)-f(w,x)^p}{p} \]
	is a $p$-derivation such that $\psi(w)=0$. Since $R$ is a localization of $\ZZ[t,x]$, we have by Proposition~\ref{existence-p-der}~(\ref{p-der-localize}) that there is a $p$-derivation $\delta$ on $R$ that extends $\psi$. In particular, $\delta(w)=0\in Q$. Also, $\frac{d}{dx}(w)=px^{p-1} \in Q$. Thus, $w\in Q\difM{2} \smallsetminus Q^{(2)}$.
\end{example}

\section*{Acknowledgments}
We would like to thank Alexandru Buium for a helpful correspondence, and Bhargav Bhatt for useful conversations and suggestions. We thank the anonymous referees for many helpful comments and suggestions. We thank Holger Brenner and Luis N\'u\~nez-Betancourt for allowing us to include a proof of Lemma~\ref{diff-powers-localize}. We thank Rankeya Datta for pointing out to us a mistake in a previous proof of Lemma \ref{free-basis}, due to a wrong claim about the local structure of essentially smooth algebras over a field or a DVR. The third author was partially supported by NSF Grant DMS~\#1606353.

\bibliographystyle{alpha}
\bibliography{References}

\end{document}